\newtheorem{theorem}{Theorem}[section] 
\newtheorem{lemma}[theorem]{Lemma}
\newtheorem{remark}[theorem]{Remark}
\newtheorem{assumption}{Assumption} 
\newtheorem{definition}{Definition}
\newcommand{\beq}{\begin{equation}} 
\newcommand{\eeq}{\end{equation}} 
\newcommand{\beqa}{\begin{eqnarray}} 
\newcommand{\eeqa}{\end{eqnarray}} 
\newcommand{\beqas}{\begin{eqnarray*}} 
\newcommand{\eeqas}{\end{eqnarray*}} 
\newcommand{\ba}{\begin{array}} 
\newcommand{\ea}{\end{array}} 
\newcommand{\bi}{\begin{itemize}} 
\newcommand{\ei}{\end{itemize}} 
\newcommand{\gap}{\hspace*{2em}} 
\newcommand{\nn}{\nonumber}
\def\eqnok#1{(\ref{#1})}
\def\vgap{\vspace*{.1in}}
\def\QED{\ifhmode\unskip\nobreak\fi\ifmmode\ifinner\else\hskip5pt\fi\fi
  \hbox{\hskip5pt\vrule width5pt height5pt depth1.5pt\hskip1pt}} 
\def\Arg{{\rm Arg}}
\def\bx{{\bar x}}
\def\cB{{\cal B}}
\def\cN{{\cal N}}
\def\cK{{\cal K}}
\def\cU{{\cal U}}
\def\eps{{\epsilon}}
\def\tx{{\tilde x}}
\title{Iterative Hard Thresholding Methods for  $l_0$ Regularized Convex Cone Programming}  
\author{
	Zhaosong Lu%
	\thanks{
	Department of Mathematics, Simon Fraser University, Burnaby, BC, 
	V5A 1S6, Canada. (email: {\tt zhaosong@sfu.ca}). This work was supported 
        in part by NSERC Discovery Grant. Part of this work was conduct during the author's 
        sabbatical leave in Department of Industrial and Systems Engineering at 
       Texas A\&M  University. The author would like to thank them for hosting his visit.} 
	}
\date{October 30, 2012}
\begin{document}

\maketitle

\begin{abstract}

In this paper we consider $l_0$ regularized  convex cone programming problems. In particular, we first 
propose an  iterative hard thresholding (IHT) method and its variant for solving $l_0$ regularized box 
constrained convex programming. We show that the sequence generated by these methods converges 
to a local minimizer. Also, we establish the iteration complexity  of the IHT method for finding an 
$\eps$-local-optimal solution. We then propose a method for solving $l_0$ regularized  convex cone 
programming by applying the IHT method to its quadratic penalty relaxation and establish its iteration 
complexity for finding an $\eps$-approximate local minimizer. Finally, we propose a variant of this 
method in which the associated penalty parameter is dynamically updated, and show that every 
accumulation point is a local minimizer of the problem.

\vskip14pt

\noindent {\bf Key words:} Sparse approximation, iterative hard thresholding method, $l_0$ 
regularization,  box constrained convex programming, convex cone programming

\vskip14pt
\end{abstract}

\section{Introduction} \label{intro}

Sparse approximations have over the last decade gained a great deal of popularity in 
numerous areas.  For example, in compressed sensing, a large sparse signal is decoded 
by finding a sparse solution to a system of linear equalities and/or inequalities. Our particular 
interest of this paper is to find a sparse approximation to a convex cone programming problem 
in the form of
\beq \label{cone}
\ba{ll}
\min & f(x)  \\
\mbox{s.t.} & A x - b \in \cK^*, \\
& l \le x \le u  
\ea 
\eeq
for some $l \in \bar\Re^n_{-}$, $u \in \bar\Re^n_{+}$, $A \in \Re^{m \times n}$ and 
$b \in \Re^m$, where $\cK^*$ denotes the dual cone of a closed convex cone $\cK \subseteq \Re^m$, 
i.e., $\cK^* = \{s\in \Re^m: s^Tx \ge 0, \forall x \in \cK\}$, and $\bar\Re^n_-=\{x:-\infty \le x_i \le 0, 
\  1 \le i \le n\}$ and $\bar\Re^n_+=\{x\in \Re^n: 0 \le x_i \le \infty, \ 1 \le i \le n\}$.  A sparse 
solution to \eqref{cone} can be sought by solving the following $l_0$ regularized convex cone 
programming problem:
\beq \label{l0-cone}
\ba{ll}
\min & f(x) + \lambda \|x\|_0 \\
\mbox{s.t.} & A x - b \in \cK^*, \\
& l \le x \le u  
\ea 
\eeq
for some $\lambda >0$, where $\|x\|_0$ denotes the cardinality of $x$. One special case of 
\eqref{l0-cone}, that is, the $l_0$-regularized unconstrained least squares problem, has been 
well studied in the literature (e.g., \cite{Nik11,LuZh12}), and some methods were developed for 
solving it. For example, the iterative hard thresholding (IHT) methods \cite{HeGiTr06,BlDa08,BlDa09} 
and matching pursuit algorithms \cite{MaZh93,Tr04} were proposed to solve this type of 
problems. Recently, Lu and Zhang \cite{LuZh12} proposed a penalty decomposition method for solving 
a more general class of $l_0$ minimization problems.

As shown by the extensive experiments in \cite{BlDa08,BlDa09}, the IHT method performs very well in 
finding a sparse solution to unconstrained least squares problems. In addition, the similar type of methods \cite{CaCaSh10,JaMeDh10} were successfully applied to find low rank solutions in the 
context of matrix completion.  Inspired by these works, in this paper we study IHT methods 
for solving $l_0$ regularized  convex cone programming problem \eqref{l0-cone}. In particular, 
we first propose an IHT method and its variant for solving $l_0$ regularized box constrained 
convex programming. We show that the sequence generated by these methods converges to 
a local minimizer. Also, we establish the iteration complexity  of the IHT method for finding an 
$\eps$-local-optimal solution. We then propose a method for solving $l_0$ regularized  convex 
cone programming by applying the IHT method to its quadratic penalty relaxation and establish 
its iteration complexity for finding an $\eps$-approximate local minimizer of the problem. We 
also propose a variant of the method in which the associated penalty parameter is dynamically 
updated, and show that every accumulation point is a local minimizer of the problem.

The outline of this paper is as follows. In  Subsection \ref{notation} we introduce some notations 
that are used in the paper. In Section \ref{tech} we present some technical results about a projected 
gradient method for convex programming.  In Section \ref{l0-box} we propose IHT methods for solving 
$l_0$ regularized box constrained convex programming and study their convergence.  In section 
\ref{l0-cp} we develop IHT methods for solving $l_0$ regularized  convex cone programming and 
study their convergence. Finally, in Section \ref{conclude} we present some concluding remarks. 

 \subsection{Notation} \label{notation}

Given a nonempty closed convex $\Omega \subseteq \Re^n$ and  an arbitrary point 
$x \in \Omega$, $\cN_\Omega(x)$ denotes the normal cone of $\Omega$ at $x$. In addition, 
$d_\Omega(y)$ denotes the Euclidean distance between $y \in \Re^n$ and $\Omega$. All norms used 
in the paper are Euclidean norm denoted by $\|\cdot\|$. We  use $\cU(r)$ to denote a ball 
centered at the origin with a radius $r \ge 0$, that is, $\cU(r): = \{x\in\Re^n: \|x\| \le r\}$.

\section{Technical preliminaries} \label{tech}

In this section we present some technical results about a projected gradient method for convex 
programming that will be subsequently used in this paper.

Consider the convex programming problem 
\beq \label{cp-phi}
\phi^* := \min\limits_{x\in X} \phi(x), 
\eeq 
where $X \subseteq \Re^n$ is a closed convex set and $\phi: X \to \Re$ is 
a smooth convex function whose gradient is Lipschitz continuous with constant $L_\phi>0$. 
Assume that the set of optimal solutions of \eqref{cp-phi}, denoted by $X^*$, is nonempty.

Let $L \ge L_\phi$ be arbitrarily given. A projected gradient of $\phi$ at any $x\in X$ with respect 
to $X$ is defined as 
\beq \label{gx}
g(x) := L\left[x - \Pi_X\left(x-\nabla \phi(x)/L\right)\right],
\eeq 
where $\Pi_X(\cdot)$ is the projection map onto $X$ (see, for example, \cite{Nes04}).

The following properties of the projected gradient are essentially shown in Proposition 3 and 
Lemma 4 of \cite{LaMo12} (see also \cite{Nes04}).

\begin{lemma} \label{proj-grad}
Let $x\in X$ be given and define $x^+: = \Pi_X(x-\nabla \phi(x)/L)$. Then, 
for any given $\epsilon \ge 0$, the following statements hold:
\bi
\item[a)] $\|g(x)\| \le \epsilon$ if and only if $\nabla \phi(x) \in -\cN_X(x^+) + \cU(\epsilon)$. 
\item[b)] $\|g(x)\| \le \epsilon$ implies that $\nabla \phi(x^+) \in -\cN_X(x^+) + \cU(2\epsilon)$.
\item[c)] $\phi(x^+) -\phi(x) \le -\|g(x)\|^2/(2L)$.
\item[d)] $\phi(x) - \phi(x^*) \ge \|g(x)\|^2/(2L)$, where $x^* \in \Arg\min\{\phi(y): y\in X\}$.
\ei
\end{lemma}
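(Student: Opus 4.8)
The plan is to establish the four statements in the order a)–b)–c)–d), since the later ones build naturally on the earlier characterizations. The unifying tool throughout is the standard variational characterization of the Euclidean projection: for any $z\in\Re^n$, the point $w=\Pi_X(z)$ is the unique point of $X$ satisfying $z-w\in\cN_X(w)$, equivalently $\langle z-w,\,y-w\rangle\le 0$ for all $y\in X$.

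\textbf{Parts a) and b).}
For a), I would apply this characterization with $z=x-\nabla\phi(x)/L$ and $w=x^+$. By definition $g(x)=L(x-x^+)$, so $z-x^+ = (g(x) - \nabla\phi(x))/L$. The projection property says $z-x^+\in\cN_X(x^+)$, i.e. $\tfrac1L(g(x)-\nabla\phi(x))\in\cN_X(x^+)$, hence $g(x)-\nabla\phi(x)\in\cN_X(x^+)$ (the normal cone is a cone), i.e. $\nabla\phi(x)\in g(x)-\cN_X(x^+)$. Now $\|g(x)\|\le\epsilon$ exactly says $g(x)\in\cU(\epsilon)$, which gives $\nabla\phi(x)\in -\cN_X(x^+)+\cU(\epsilon)$; conversely, if $\nabla\phi(x)\in -\cN_X(x^+)+\cU(\epsilon)$, then $g(x)=\nabla\phi(x)+(g(x)-\nabla\phi(x))$ lies in $\cU(\epsilon)+(-\cN_X(x^+)+\cN_X(x^+))$ — here one must argue using the fact that $g(x)-\nabla\phi(x)$ is a \emph{specific} element of $\cN_X(x^+)$ and that $\cN_X(x^+)$ is convex, so subtracting it from an element of $-\cN_X(x^+)+\cN_X(x^+)=\cN_X(x^+)-\cN_X(x^+)$ is handled by the cone/convexity structure; this direction is the slightly delicate bookkeeping step. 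For b), I would write $\nabla\phi(x^+) = \nabla\phi(x) + (\nabla\phi(x^+)-\nabla\phi(x))$, use part a) to place $\nabla\phi(x)\in -\cN_X(x^+)+\cU(\epsilon)$, and bound $\|\nabla\phi(x^+)-\nabla\phi(x)\|\le L_\phi\|x^+-x\| \le L\|x^+-x\| = \|g(x)\|\le\epsilon$ by Lipschitz continuity of $\nabla\phi$ and $L\ge L_\phi$. Adding the two $\epsilon$-balls gives $\cU(2\epsilon)$.

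\textbf{Parts c) and d).}
For c), I would start from the descent lemma for functions with $L_\phi$-Lipschitz gradient: $\phi(x^+)\le\phi(x)+\langle\nabla\phi(x),x^+-x\rangle+\tfrac{L_\phi}{2}\|x^+-x\|^2 \le \phi(x)+\langle\nabla\phi(x),x^+-x\rangle+\tfrac{L}{2}\|x^+-x\|^2$. Then I would use the projection inequality at $z=x-\nabla\phi(x)/L$, $w=x^+$, tested against $y=x\in X$: $\langle x-\tfrac1L\nabla\phi(x)-x^+,\,x-x^+\rangle\le 0$, which rearranges to $\langle\nabla\phi(x),x^+-x\rangle \le -L\|x^+-x\|^2$. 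Substituting and noting $\|x^+-x\|^2=\|g(x)\|^2/L^2$ yields $\phi(x^+)-\phi(x)\le -\tfrac{L}{2}\|x^+-x\|^2 = -\|g(x)\|^2/(2L)$. For d), let $x^*\in X^*$ (nonempty by assumption). I would combine convexity of $\phi$, $\phi(x^*)\ge\phi(x^+)+\langle\nabla\phi(x^+),x^*-x^+\rangle$, hmm — but $\nabla\phi(x^+)$ is not the cleanest object here; instead the standard route is: by convexity $\phi(x^*)\ge\phi(x)+\langle\nabla\phi(x),x^*-x\rangle$, and separately, a one-step analysis gives $\phi(x^+)\le\phi(y)+\tfrac{L}{2}(\|x-y\|^2-\|x^+-y\|^2)$ for all $y\in X$ (the prox-type inequality obtained by adding the descent bound on $\phi(x^+)$ to convexity $\phi(y)\ge\phi(x)+\langle\nabla\phi(x),y-x\rangle$ and completing the square with the projection inequality). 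Taking $y=x^*$ and dropping the nonnegative term $\tfrac{L}{2}\|x^+-x^*\|^2$: $\phi(x^+)\le\phi(x^*)+\tfrac{L}{2}\|x-x^*\|^2$ — this bounds $\phi(x^+)$ above, not $\phi(x)$ below, so I instead use that together with part c), $\phi(x^+)\le\phi(x)-\|g(x)\|^2/(2L)$, giving $\phi(x)-\|g(x)\|^2/(2L)\le\phi(x^*)+\tfrac L2\|x-x^*\|^2$, which is not quite d) either. The cleaner argument for d): apply part c) but with the \emph{optimal} point playing a role — take $y=x^*$ in the prox inequality and also use $\phi(x^+)\ge\phi^*=\phi(x^*)$, giving $\phi(x^*)\le\phi(x^+)\le\phi(x)+\langle\nabla\phi(x),x^*-x\rangle$... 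Actually the sharp statement d) follows from: by the projection inequality tested at $y=x^*$, $\langle x-\tfrac1L\nabla\phi(x)-x^+, x^*-x^+\rangle\le 0$; combining this with convexity and the descent bound and simplifying the quadratic terms produces $\phi(x)-\phi(x^*)\ge\|g(x)\|^2/(2L)$.

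\textbf{Main obstacle.}
The routine parts are the Lipschitz estimate in b) and the descent-lemma manipulation in c). The genuinely delicate step is part d): getting the constant exactly $1/(2L)$ rather than something weaker requires combining the projection variational inequality tested at $x^*$, the convexity inequality for $\phi$, and the descent lemma, and then carefully collecting the quadratic terms $\|x-x^+\|^2$, $\|x-x^*\|^2$, $\|x^+-x^*\|^2$ via the polarization identity $2\langle a,b\rangle=\|a\|^2+\|b\|^2-\|a-b\|^2$ so that the cross terms telescope and a nonnegative square can be discarded in the right direction. I would also take care, in a), to phrase the set-arithmetic direction rigorously: the equivalence hinges on $g(x)-\nabla\phi(x)$ being one particular element of the convex cone $\cN_X(x^+)$, so that "$\nabla\phi(x)\in -\cN_X(x^+)+\cU(\epsilon)$" transfers to "$g(x)\in\cU(\epsilon)$" only after using that any two elements of $\cN_X(x^+)$ differ by an element of $\cN_X(x^+)-\cN_X(x^+)$, and this is where one must invoke that $\cN_X(x^+)$ is a convex cone (so closed under addition) rather than treating it as an arbitrary set.
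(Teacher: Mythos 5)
First, note that the paper does not actually prove Lemma \ref{proj-grad}: it is quoted from Proposition 3 and Lemma 4 of \cite{LaMo12}, so there is no in-paper argument to compare yours against. Your derivations of the ``only if'' direction of a), of b), and of c) are the standard ones and are correct: the optimality condition for the projection subproblem gives $\nabla\phi(x)-g(x)\in-\cN_X(x^+)$, from which the forward implication in a) is immediate; b) follows by adding the Lipschitz bound $\|\nabla\phi(x^+)-\nabla\phi(x)\|\le L_\phi\|x^+-x\|\le\|g(x)\|$; and c) follows from the descent lemma combined with the projection inequality tested at $y=x$.

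There are, however, two genuine gaps. The first is the ``if'' direction of a). Your plan is to pass from $\nabla\phi(x)\in-\cN_X(x^+)+\cU(\epsilon)$ back to $\|g(x)\|\le\epsilon$ by set arithmetic inside the convex cone $\cN_X(x^+)$; but the difference of two elements of a convex cone is only known to lie in the linear span of the cone, which yields no norm bound on $g(x)$, and no bookkeeping can close this direction as literally stated. Indeed, take $X=[0,\infty)$, $\phi(y)=2y$, $L=1$ and $x=1$: then $x^+=\Pi_X(-1)=0$ and $g(x)=1$, yet $\nabla\phi(x)=2\in[0,\infty)=-\cN_X(x^+)+\cU(0)$, so the right-hand side of a) holds with $\epsilon=0$ while $\|g(x)\|=1$. (A valid converse replaces $\cN_X(x^+)$ by $\cN_X(x)$ and uses nonexpansiveness of $\Pi_X$; in any case only the ``only if'' direction is ever invoked later in the paper.) The second gap is d), which you label ``the genuinely delicate step'' and never actually complete: no polarization identities or prox inequalities at $x^*$ are needed. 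Since $x^+\in X$ and $x^*$ minimizes $\phi$ over $X$, you have $\phi(x^*)\le\phi(x^+)$, and combining this with c) gives $\phi(x)-\phi(x^*)\ge\phi(x)-\phi(x^+)\ge\|g(x)\|^2/(2L)$ in one line.
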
 

\vgap

We next study a projected gradient method for solving \eqref{cp-phi}.  

\gap

\noindent
{\bf Projected gradient method for \eqnok{cp-phi}:}  \\ [5pt]
Choose an arbitrary $x^0\in X$. Set $k=0$. 
\begin{itemize}
\item[1)] Solve the subproblem 
\beq \label{pg-subprob}
x^{k+1} = \arg\min\limits_{x\in X}\{\phi(x^k)+\nabla \phi(x^k)^T(x-x^k)+\frac{L}{2}\|x-x^k\|^2\}.
\eeq
\item[2)]
Set $k \leftarrow k+1$ and go to step 1). 
\end{itemize}
\noindent
{\bf end}

\vgap

Some properties of the above projected gradient method are established in the following 
two theorems, which will be used in the subsequent sections of this paper.

\begin{theorem} \label{sd-lemma} 
Let $\{x^k\}$ be generated by the above projected gradient method. Then the 
following statements hold:
\bi
\item[\rm (i)] 
For every $k \ge 0$ and $l \ge 1$, 
\beq \label{suff-reduct}
\phi(x^{k+l}) - \phi^* \le \frac{L}{2l} \|x^k-x^*\|^2.
\eeq
\item[\rm (ii)]
$\{x^k\}$ converges to some optimal solution $x^*$ of \eqref{cp-phi}. 
\ei
\end{theorem}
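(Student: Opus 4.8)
The plan is to exploit the standard "sufficient decrease" inequality for the projected gradient step together with convexity of $\phi$. For part (i), first observe that the subproblem \eqref{pg-subprob} is exactly $x^{k+1} = \Pi_X(x^k - \nabla\phi(x^k)/L)$, so Lemma \ref{proj-grad}(c) gives $\phi(x^{k+1}) \le \phi(x^k) - \|g(x^k)\|^2/(2L)$; in particular $\{\phi(x^k)\}$ is nonincreasing. The key estimate I would derive is the one-step inequality
\beq \label{eq:onestep}
\phi(x^{k+1}) - \phi(y) \le \frac{L}{2}\|x^k - y\|^2 - \frac{L}{2}\|x^{k+1} - y\|^2 \quad \mbox{for all } y \in X,
\eeq
which follows from the optimality condition for the strongly convex subproblem \eqref{pg-subprob} (the model majorizes $\phi$, and the quadratic identity for the proximal/projection step supplies the two squared-distance terms), combined with convexity of $\phi$ to pass from the linear model at $x^k$ down to $\phi(y)$. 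Taking $y = x^* \in X^*$ in \eqref{eq:onestep} and summing from $k$ to $k+l-1$ telescopes the distance terms, yielding $\sum_{j=k}^{k+l-1}\big(\phi(x^{j+1}) - \phi^*\big) \le \frac{L}{2}\|x^k - x^*\|^2$. Since $\{\phi(x^j)\}$ is nonincreasing, the left side is at least $l\,(\phi(x^{k+l}) - \phi^*)$, and dividing by $l$ gives \eqref{suff-reduct}.

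For part (ii), I would first note that \eqref{eq:onestep} with $y=x^*$ shows $\|x^{k+1}-x^*\| \le \|x^k - x^*\|$ (the term $\phi(x^{k+1})-\phi^*$ being nonnegative), so $\{x^k\}$ is bounded and Fej\'er-monotone with respect to $X^*$. Hence $\{x^k\}$ has an accumulation point $\bx$, and by part (i) (with $k=0$, $l\to\infty$) we have $\phi(x^k)\to\phi^*$; by continuity $\phi(\bx)=\phi^*$, so $\bx\in X^*$. Finally, since $\{\|x^k-\bx\|\}$ is nonincreasing (apply the Fej\'er inequality with $x^*=\bx$) and a subsequence converges to $0$, the whole sequence converges to $\bx$.

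The only mildly delicate point is establishing \eqref{eq:onestep}: one must combine the first-order optimality condition of the subproblem \eqref{pg-subprob} — namely $\langle \nabla\phi(x^k) + L(x^{k+1}-x^k),\, y - x^{k+1}\rangle \ge 0$ for all $y\in X$ — with the gradient inequality $\phi(y) \ge \phi(x^k) + \langle \nabla\phi(x^k), y - x^k\rangle$ and the descent bound $\phi(x^{k+1}) \le \phi(x^k) + \langle\nabla\phi(x^k), x^{k+1}-x^k\rangle + \frac{L}{2}\|x^{k+1}-x^k\|^2$ (which holds since $L \ge L_\phi$), and then rearrange using the identity $\|x^{k+1}-x^k\|^2 + 2\langle x^{k+1}-x^k, x^k-y\rangle = \|x^{k+1}-y\|^2 - \|x^k-y\|^2$. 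Everything else is routine telescoping and a standard Fej\'er-monotonicity argument.
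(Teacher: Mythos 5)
Your proposal is correct and follows essentially the same route as the paper: the one-step inequality \eqref{eq:onestep} is exactly the paper's inequality \eqref{1st-opt} (the paper derives it from strong convexity of the subproblem objective rather than from the optimality condition plus the three-point identity, but these are equivalent), and both proofs then telescope with $y=x^*$ for part (i) and use Fej\'er monotonicity with respect to an accumulation point for part (ii). No gaps.
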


\begin{proof}
(i) Since the objective function of \eqref{pg-subprob} is strongly convex with modulus 
$L$, it follows that for every $x\in X$,
\[
\phi(x^k)+\nabla \phi(x^k)^T(x-x^k)+\frac{L}{2}\|x-x^k\|^2 \ge \phi(x^k)+
\nabla \phi(x^k)^T(x^{k+1}-x^k)+\frac{L}{2}\|x^{k+1}-x^k\|^2 + \frac{L}{2}\|x-x^{k+1}\|^2. 
\]
By the convexity of $\phi$, Lipschitz continuity of $\nabla \phi$ and $L \ge L_\phi$,  we have
\[
\ba{lcl}
\phi(x) &\ge & \phi(x^k)+\nabla \phi(x^k)^T(x-x^k), \\ [5pt]
\phi(x^{k+1}) &\le &\phi(x^k)+ 
\nabla \phi(x^k)^T(x^{k+1}-x^k)+\frac{L}{2}\|x^{k+1}-x^k\|^2, 
\ea
\]
which together with the above inequality imply that 
\beq \label{1st-opt}
\phi(x) +\frac{L}{2}\|x-x^k\|^2 \ \ge \ \phi(x^{k+1}) + \frac{L}{2}\|x-x^{k+1}\|^2, \ \ \ \forall x\in X. 
\eeq
Letting $x = x^k$ in \eqref{1st-opt}, we obtain that 
\[
\phi(x^k) - \phi(x^{k+1}) \ \ge \ L \|x^{k+1}-x^k\|^2/2.
\]
Hence, $\{\phi(x^k)\}$ is decreasing. 
Letting $x=x^* \in X^*$ in \eqref{1st-opt}, we have 
\[
\phi(x^{k+1}) - \phi^* \ \le \ \frac{L}{2} \left(\|x^k-x^*\|^2-\|x^{k+1}-x^*\|^2\right), 
\ \ \ \forall k \ge 0.
\]
Using this inequality and the monotonicity of $\{\phi(x^k)\}$, we obtain that 
\beq \label{sum-ineq}
l (\phi(x^{k+l})-\phi^*) \ \le \ \sum^{k+l-1}\limits_{i=k}[\phi(x^{i+1})-\phi^*]  \ \le \ 
\frac{L}{2} \left(\|x^k-x^*\|^2-\|x^{k+l}-x^*\|^2\right),
\eeq
which immediately yields \eqref{suff-reduct}.

(ii) It follows from \eqref{sum-ineq} that 
\beq \label{dist}
\|x^{k+l} -x^*\| \ \le \ \|x^k-x^*\|,  \ \ \ \forall k \ge 0, l \ge 1. 
\eeq
Hence, $\|x^k-x^*\| \le \|x^0-x^*\|$ for every $k$. It implies that $\{x^k\}$ is bounded. Then, there 
exists a subsequence $K$ such that $\{x^k\}_K \to \hat x^* \in X$. It can be seen from 
\eqref{suff-reduct} that $\{\phi(x^k)\}_K \to \phi^*$. Hence, $\phi(\hat x^*) = \lim_{k\in K \to \infty} 
\phi(x^k) = \phi^*$, which implies that $\hat x^* \in X^*$. Since \eqref{dist} holds for any $x^*\in 
X^*$, we also have $\|x_{k+l} -\hat x^*\| \ \le \ \|x^k-\hat x^*\|$ for every $k \ge 0$ and $l \ge 1$. 
This together with the fact $\{x^k\}_K \to \hat x^*$ implies that $\{x^k\} \to \hat x^*$ and hence 
statement (ii) holds. 
\end{proof}

\gap

\begin{theorem} \label{sd-thm}
Suppose that $\phi$ is strongly convex with modulus $\sigma >0$. Let $\{x^k\}$ be generated by 
the above projected gradient method. Then, for any given $\epsilon >0$, the following statements hold:
\bi
\item[\rm (i)]
 $\phi(x^k)-\phi^* \le  \epsilon$ whenever 
\[
k \ge  2\lceil  L/\sigma\rceil \left\lceil \log\frac{\phi(x^0)-\phi^*}{\epsilon}
\right\rceil.
\]
\item[\rm (ii)]  $\phi(x^k)-\phi^* <  \epsilon$ whenever 
\[
k \ge 2 \lceil  L/\sigma\rceil \left\lceil \log\frac{\phi(x^0)-\phi^*}{\epsilon} 
\right\rceil + 1.
\]
\ei
\end{theorem}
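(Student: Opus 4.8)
The new ingredient over Theorem~\ref{sd-lemma} is strong convexity, and the plan is to use it to upgrade the sublinear estimate \eqref{suff-reduct} into a linear (geometric) rate. The tool is the quadratic growth inequality: since $\phi$ is $\sigma$-strongly convex and $x^*\in X^*$ minimizes $\phi$ over the convex set $X$, first-order optimality gives $\nabla\phi(x^*)^T(x-x^*)\ge 0$ for all $x\in X$, hence $\phi(x)-\phi^*\ge\frac{\sigma}{2}\|x-x^*\|^2$ for every $x\in X$ (so $x^*$ is in fact unique). Because $\{x^k\}\subseteq X$, this holds at each iterate $x^k$.

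First I would fix a block length $N:=2\lceil L/\sigma\rceil$ and combine \eqref{suff-reduct} (with $l=N$) with the growth inequality at $x^k$ to get
\[
\phi(x^{k+N})-\phi^* \ \le\ \frac{L}{2N}\,\|x^k-x^*\|^2 \ \le\ \frac{L}{N\sigma}\,\bigl(\phi(x^k)-\phi^*\bigr) \ \le\ \tfrac12\bigl(\phi(x^k)-\phi^*\bigr),
\]
the last inequality because $N\ge 2L/\sigma$. Iterating over $j$ consecutive blocks gives $\phi(x^{jN})-\phi^*\le 2^{-j}\bigl(\phi(x^0)-\phi^*\bigr)$, which is $\le\epsilon$ once $j$ is of order $\log\bigl((\phi(x^0)-\phi^*)/\epsilon\bigr)$. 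To turn an estimate at the block endpoints $jN$ into one at an arbitrary index $k$, I would invoke the monotonicity of $\{\phi(x^k)\}$ established inside the proof of Theorem~\ref{sd-lemma} (namely $\phi(x^k)-\phi(x^{k+1})\ge\frac{L}{2}\|x^{k+1}-x^k\|^2\ge 0$): this gives $\phi(x^k)-\phi^*\le\phi(x^{jN})-\phi^*$ for all $k\ge jN$, and taking $j=\lceil\log((\phi(x^0)-\phi^*)/\epsilon)\rceil$ yields the threshold in part~(i).

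For part~(ii) I would use that the decrease is strict away from the solution set. Indeed $x^{k+1}=x^k$ can only occur if $x^k$ is a fixed point of the projected-gradient map, i.e.\ a stationary point of $\phi$ over $X$, which by convexity forces $x^k\in X^*$ and $\phi(x^k)=\phi^*$; otherwise $x^{k+1}\ne x^k$ and $\phi(x^{k+1})<\phi(x^k)$. Hence, if $\phi(x^{k-1})-\phi^*\le\epsilon$, then either $\phi(x^{k-1})=\phi^*$ and so $\phi(x^k)=\phi^*<\epsilon$, or $\phi(x^k)<\phi(x^{k-1})\le\epsilon$; in both cases $\phi(x^k)-\phi^*<\epsilon$. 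Applying this single extra step to the threshold from~(i) gives the threshold in~(ii).

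The main obstacle is bookkeeping rather than substance: pinning down the block length and handling the various ceilings so the count emerges in precisely the stated form $2\lceil L/\sigma\rceil\lceil\log((\phi(x^0)-\phi^*)/\epsilon)\rceil$, and --- if one prefers a clean per-step contraction to the per-block one above --- re-deriving \eqref{1st-opt} while retaining the term $\frac{\sigma}{2}\|x-x^k\|^2$ coming from strong convexity, which yields $\phi(x^{k+1})-\phi^*+\frac{L}{2}\|x^{k+1}-x^*\|^2\le\frac{L-\sigma}{2}\|x^k-x^*\|^2$ and hence the linear decay of both $\|x^k-x^*\|$ and $\phi(x^k)-\phi^*$. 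I do not expect conceptual difficulty, only care with the constants and with ensuring the monotonicity step covers indices that are not multiples of $N$.
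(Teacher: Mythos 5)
Your proposal is correct and follows essentially the same route as the paper: strong convexity gives the quadratic growth bound $\phi(x^k)-\phi^*\ge\frac{\sigma}{2}\|x^k-x^*\|^2$, which combined with \eqref{suff-reduct} yields halving of the optimality gap every $2\lceil L/\sigma\rceil$ iterations, and monotonicity of $\{\phi(x^k)\}$ extends the block estimate to all indices. Your part~(ii) argument (strict decrease unless the iterate is already optimal) is also the paper's, phrased via the fixed-point characterization rather than via $g(x)\neq 0$ and Lemma~\ref{proj-grad}~(c), a cosmetic difference only.
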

 
 \begin{proof}
(i) Let $M= \lceil  L/\sigma\rceil$. It follows from Theorem \ref{sd-lemma} that 
 \[
 \phi(x^{k+l}) - \phi^* \le \frac{L}{2l} \|x^k-x^*\|^2 \le \frac{L}{\sigma l} (\phi(x^k)-\phi^*),
 \]
where $x^*$ is the optimal solution of \eqref{cp-phi}. Hence, we have
\[
\phi(x^{k+2M}) - \phi^* \le \frac{L}{2\sigma M} (\phi(x^k)-\phi^*) \ \le \ \frac12 (\phi(x^k)-\phi^*),
\]
which implies that 
\[
\phi(x^{2jM}) - \phi^* \le \frac{1}{2^j} (\phi(x^0)-\phi^*).
\] 
Let $K=\lceil \log((\phi(x^0)-\phi^*)/\epsilon)\rceil$. Hence, when $k \ge 2KM$, we have
\[
\phi(x^k) - \phi^* \ \le \ \phi(x^{2KM})-\phi^* \ \le \ \frac{1}{2^K}(\phi(x^0)-\phi^*) \ \le \ \epsilon,
\] 
 which immediately implies that statement (i) holds.

(ii) Let $K$ and $M$ be defined as above. If $\phi(x^{2KM})=\phi^*$, by monotonicity of $\{\phi(x^k)\}$ 
we have $\phi(x^k)=\phi^*$ when $k > 2KM$, and hence the conclusion holds. We 
now suppose that $\phi(x^{2KM}) >\phi^*$. It implies that $g(x^{2KM}) \neq 0$, where $g$ is defined 
in \eqref{gx}. Using this relation,  Lemma \ref{proj-grad} (c) and statement (i), we obtain that 
$\phi(x^{2KM+1}) < \phi(x^{2KM}) \le \eps$, which together with the montonicity of $\{\phi(x^k)\}$ 
implies that the conclusion holds.  
 \end{proof}

\gap

Finally, we consider the convex programming problem 
\beq \label{cp-cone}
f^*:= \min\{f(x): Ax-b \in \cK^*, x\in X\}, 
\eeq
for some $A\in \Re^{m \times n}$ and $b\in\Re^m$, where $f:X \to \Re$ is a smooth convex function 
whose gradient is  Lipschitz continuous gradient with constant $L_f>0$, $X \subseteq \Re^n$ is a 
closed convex set, and $\cK^*$ is the dual cone of a closed convex cone $\cK$.  

The Lagrangian dual function associated with \eqref{cp-cone} is given by 
\[
d(\mu) := \inf\{f(x)+\mu^T(Ax-b): x\in X\}, \ \ \ \forall \mu \in -\cK. 
\]

Assume that there exists a Lagrange multiplier for \eqref{cp-cone}, that is, a vector 
$\mu^* \in -\cK$ such that $d(\mu^*)=f^*$. Under this assumption, the following results 
are established in Corollary 2 and Proposition 10 of \cite{LaMo12}, respectively. 

\begin{lemma} \label{gap-infeas}
Let $\mu^*$ be a Lagrange multiplier for \eqref{cp-cone}. There holds: 
\[
f(x)-f^* \ge -\|\mu^*\|d_{\cK^*}(Ax-b), \  \ \ \forall x\in X.
\]
\end{lemma}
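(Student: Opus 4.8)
\emph{Proof proposal.} The plan is to combine the defining inequality of the Lagrangian dual with a one-line estimate that controls the inner product $(\mu^*)^T(Ax-b)$ by the distance of $Ax-b$ to $\cK^*$. First I would invoke the Lagrange multiplier assumption: since $f^* = d(\mu^*) = \inf\{f(y)+(\mu^*)^T(Ay-b): y\in X\}$, for every $x\in X$ we have $f(x) + (\mu^*)^T(Ax-b) \ge f^*$, i.e. $f(x) - f^* \ge -(\mu^*)^T(Ax-b)$. It then remains to bound the right-hand side from below by $-\|\mu^*\|\,d_{\cK^*}(Ax-b)$.

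For that, write $z := Ax-b$ and let $\bar z := \Pi_{\cK^*}(z)$ be its projection onto the nonempty closed convex set $\cK^*$, so that $d_{\cK^*}(z) = \|z-\bar z\|$. Split $-(\mu^*)^Tz = -(\mu^*)^T\bar z - (\mu^*)^T(z-\bar z)$. Since $\mu^*\in-\cK$ we have $-\mu^*\in\cK$, and $\bar z\in\cK^*$, so by the definition of the dual cone $(-\mu^*)^T\bar z \ge 0$, hence $-(\mu^*)^T\bar z \ge 0$. For the remaining term, Cauchy--Schwarz gives $-(\mu^*)^T(z-\bar z) \ge -\|\mu^*\|\,\|z-\bar z\| = -\|\mu^*\|\,d_{\cK^*}(z)$. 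Adding the two estimates yields $-(\mu^*)^Tz \ge -\|\mu^*\|\,d_{\cK^*}(Ax-b)$, and chaining this with the inequality from the first step completes the argument.

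There is no serious obstacle here; the only points requiring care are the sign conventions — it is precisely the hypothesis $\mu^*\in-\cK$ that makes $(-\mu^*)^T\bar z\ge 0$ for $\bar z\in\cK^*$ — and the fact that the projection onto $\cK^*$ is well defined and lands in $\cK^*$ because $\cK^*$ is a nonempty closed convex cone. If one prefers to avoid naming the projection, the same estimate follows by taking the infimum over $w\in\cK^*$ in the inequality $\|\mu^*\|\,\|z-w\| \ge (\mu^*)^T(w-z)$ together with $(\mu^*)^Tw\le 0$, but the projection point gives the cleanest write-up.
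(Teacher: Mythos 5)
Your proof is correct and complete: the first step is exactly the definition of the Lagrange multiplier ($f^*=d(\mu^*)$ gives $f(x)-f^*\ge -(\mu^*)^T(Ax-b)$ for all $x\in X$), and the second step correctly splits $Ax-b$ through its projection onto $\cK^*$, using $\mu^*\in-\cK$ together with the definition of the dual cone for one piece and Cauchy--Schwarz for the other. Note that the paper itself gives no proof of this lemma --- it is quoted as Corollary 2 of the cited Lan--Monteiro reference --- so there is nothing to compare against line by line; your argument is the standard one and supplies a self-contained justification consistent with the paper's conventions (in particular the sign convention $\cK^*=\{s:s^Tx\ge 0,\ \forall x\in\cK\}$ and $\mu^*\in-\cK$, which you handle correctly).
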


\begin{lemma} \label{approx-soln}
Let $\rho>0$ be given and $L_\rho = L_f+\rho\|A\|^2$. Consider the problem 
\beq \label{cp-pensub}
\Phi^*_\rho := \min\limits_{x\in X} \{\Phi_\rho(x) := f(x) + \frac{\rho}{2} [d_{\cK^*}(Ax-b)]^2\}.
\eeq
If $x\in X$ is a $\xi$-approximate solution of \eqref{cp-pensub}, i.e., 
$\Phi_\rho(x)-\Phi^*_\rho \le \xi$, then the pair $(x^+,\mu)$ defined 
as 
\[
\ba{lcl}
x^+ &:=& \Pi_X(x-\nabla \Phi_\rho(x)/{L_\rho}), \\ [5pt]
\mu &:=& \rho[Ax^+ - b - \Pi_{\cK^*}(Ax^+ - b)] 
\ea
\]
is in $X \times (-\cK)$ and satisfies $\mu^T\Pi_{\cK^*}(Ax^+ - b) = 0$
and the relations 
\[
\ba{lcl}
d_{\cK^*}(Ax^+-b) &\le& \frac{1}{\rho} \|\mu^*\| + \sqrt{\frac{\xi}{\rho}}, \\ [6pt]
\nabla f(x^+) + A^T \mu &\in& -\cN_X(x^+) + \cU(2\sqrt{2L_\rho \xi}),
\ea
\]
where $\mu^*$ is an arbitrary Lagrange multiplier for \eqref{cp-cone}.  
\end{lemma}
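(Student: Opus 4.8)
The plan is to regard $(x^+,\mu)$ as the data produced by one projected-gradient step on the penalized problem \eqref{cp-pensub}, and then to read off the four assertions from Lemma~\ref{proj-grad}, the Moreau decomposition, and the duality bound of Lemma~\ref{gap-infeas}.

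First I would record the calculus behind the penalty term: for a closed convex set $C$, the function $y\mapsto\frac12[d_C(y)]^2$ is differentiable with gradient $y-\Pi_C(y)$, which is nonexpansive. Hence $\Phi_\rho$ is smooth and convex with
\[
\nabla\Phi_\rho(x)=\nabla f(x)+\rho A^T\bigl(Ax-b-\Pi_{\cK^*}(Ax-b)\bigr),
\]
and this gradient is Lipschitz with constant $L_\rho=L_f+\rho\|A\|^2$. Consequently $x^+=\Pi_X(x-\nabla\Phi_\rho(x)/L_\rho)$ is precisely the point appearing in Lemma~\ref{proj-grad} with $\phi=\Phi_\rho$, $L=L_\rho$, and evaluating the gradient formula at $x^+$ gives the identity $\nabla\Phi_\rho(x^+)=\nabla f(x^+)+A^T\mu$.

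Next I would dispose of the two algebraic claims. Trivially $x^+\in X$. Applying the Moreau decomposition to the closed convex cone $\cK^*$, whose polar cone is $-\cK$, the vector $z:=Ax^+-b$ splits orthogonally as $z=\Pi_{\cK^*}(z)+\Pi_{-\cK}(z)$; hence $\mu=\rho\,\Pi_{-\cK}(z)\in-\cK$, so $(x^+,\mu)\in X\times(-\cK)$, and $\mu^T\Pi_{\cK^*}(z)=\rho\,\Pi_{-\cK}(z)^T\Pi_{\cK^*}(z)=0$ is the asserted complementarity. For the inclusion involving $\nabla f(x^+)+A^T\mu$, I would invoke Lemma~\ref{proj-grad}: the hypothesis $\Phi_\rho(x)-\Phi^*_\rho\le\xi$ together with part (d) forces $\|g(x)\|\le\sqrt{2L_\rho\xi}$, whence part (b) gives $\nabla\Phi_\rho(x^+)\in-\cN_X(x^+)+\cU(2\sqrt{2L_\rho\xi})$; substituting the identity $\nabla\Phi_\rho(x^+)=\nabla f(x^+)+A^T\mu$ yields the second displayed relation.

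Finally, for the feasibility estimate I would combine the penalty structure with duality. Every feasible point of \eqref{cp-cone} lies in $X$ and makes the penalty vanish, so $\Phi^*_\rho\le f^*$; and by Lemma~\ref{proj-grad}(c), $\Phi_\rho(x^+)\le\Phi_\rho(x)\le\Phi^*_\rho+\xi\le f^*+\xi$. Writing $t:=d_{\cK^*}(Ax^+-b)$, so that $\Phi_\rho(x^+)=f(x^+)+\frac{\rho}{2}t^2$, and inserting the bound $f(x^+)-f^*\ge-\|\mu^*\|\,t$ from Lemma~\ref{gap-infeas}, I obtain the scalar quadratic inequality $\frac{\rho}{2}t^2-\|\mu^*\|\,t-\xi\le0$; solving it (completing the square and using $\sqrt{a+b}\le\sqrt a+\sqrt b$) bounds $t$ by the quantity on the right-hand side of the stated inequality. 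The routine parts here are the smoothness bookkeeping and the scalar inequality; the step that needs the most care is this last one, since it is the only place where $\mu^*$ enters and it relies on correctly chaining $\Phi^*_\rho\le f^*$, the descent inequality of Lemma~\ref{proj-grad}(c), and the duality lower bound of Lemma~\ref{gap-infeas}. A second point to get exactly right is the Moreau decomposition --- identifying the polar of $\cK^*$ as $-\cK$ so that $\mu\in-\cK$, and exploiting the orthogonality to obtain the complementarity condition at no extra cost.
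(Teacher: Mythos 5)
First, note that the paper itself contains no proof of this lemma: it is imported verbatim from Corollary~2 and Proposition~10 of the cited reference [LaMo12], so there is no in-paper argument to compare yours against. On its own terms, most of your reconstruction is sound: the gradient formula $\nabla\Phi_\rho(x)=\nabla f(x)+\rho A^T(Ax-b-\Pi_{\cK^*}(Ax-b))$ and the Lipschitz constant $L_\rho$, the Moreau decomposition of $Ax^+-b$ with respect to the pair $(\cK^*,-\cK)$ giving $\mu\in-\cK$ and $\mu^T\Pi_{\cK^*}(Ax^+-b)=0$, and the chain Lemma~\ref{proj-grad}(d) $\Rightarrow\|g(x)\|\le\sqrt{2L_\rho\xi}\Rightarrow$ Lemma~\ref{proj-grad}(b) $\Rightarrow\nabla f(x^+)+A^T\mu\in-\cN_X(x^+)+\cU(2\sqrt{2L_\rho\xi})$ are all correct.

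The gap is in the last step. From $\Phi_\rho(x^+)\le f^*+\xi$, $\Phi_\rho(x^+)=f(x^+)+\tfrac{\rho}{2}t^2$ and Lemma~\ref{gap-infeas} you correctly obtain $\tfrac{\rho}{2}t^2-\|\mu^*\|t-\xi\le 0$, but this quadratic inequality does \emph{not} imply the stated bound. Its positive root gives $t\le\rho^{-1}\bigl(\|\mu^*\|+\sqrt{\|\mu^*\|^2+2\rho\xi}\bigr)$, and even after $\sqrt{a+b}\le\sqrt{a}+\sqrt{b}$ this is only $t\le\frac{2\|\mu^*\|}{\rho}+\sqrt{\frac{2\xi}{\rho}}$, which exceeds the claimed $\frac{\|\mu^*\|}{\rho}+\sqrt{\frac{\xi}{\rho}}$ by constant factors; e.g.\ with $\|\mu^*\|=0$, $\rho=1$, $\xi=1$ the quadratic inequality permits $t=\sqrt{2}>1$. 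So your assertion that ``solving it \dots bounds $t$ by the quantity on the right-hand side'' is simply false as written, and the feasibility estimate is not established. Recovering the sharper constants requires a genuinely different mechanism than the primal value comparison you use --- essentially the dual/Moreau-envelope identity $\Phi_\rho(y)=\max_{\nu\in-\cK}\{f(y)+\nu^T(Ay-b)-\tfrac{1}{2\rho}\|\nu\|^2\}$, the resulting lower bound $\Phi^*_\rho\ge f^*-\tfrac{1}{2\rho}\|\mu^*\|^2$, and the strong concavity (modulus $1/\rho$) of the penalized dual function, which is how [LaMo12] controls $\|\mu\|=\rho\, d_{\cK^*}(Ax^+-b)$ by comparing $\mu$ to the penalized dual maximizer rather than by bounding function values. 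You should either carry out that dual argument or weaken the displayed constant to $\frac{2}{\rho}\|\mu^*\|+\sqrt{\frac{2\xi}{\rho}}$, which is all your argument proves (and which would still suffice for the way the lemma is used later, after adjusting $\rho$ and $\xi$ by constant factors).
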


\section{$l_0$ regularized box constrained convex programming}
\label{l0-box}

In this section we consider a special case of \eqref{l0-cone}, that is, $l_0$ regularized box constrained 
convex programming problem  in the form of: 
\beq \label{l0-min}
\ba{rl}
\underline{F}^*  := \min & F(x) := f(x) + \lambda \|x\|_0 \\
\mbox{s.t.} & l \le x \le u
\ea
\eeq
for some $\lambda >0$, $l \in \bar\Re^n_{-}$ and $u \in \bar\Re^n_{+}$. Recently, Blumensath and 
Davies \cite{BlDa08,BlDa09} proposed an iterative hard thresholding (IHT) method for solving a special 
case of \eqref{l0-min} with $f(x)=\|Ax-b\|^2$, $l_i=-\infty$ and $u_i=\infty$ for all $i$. Our aim is 
to extend their IHT method to solve \eqref{l0-min} and study its convergence. In addition, we establish 
its iteration complexity for finding an $\epsilon$-local-optimal solution of \eqref{l0-min}. Finally, we 
propose a variant of the IHT method in which only ``local'' Lipschitz constant of $\nabla f$ is used. 

Throughout this section we assume that $f$ is a smooth convex function in $\cB$ whose gradient is 
Lipschitz continuous with constant $L_f>0$, and also that $f$ is bounded below on the set 
$\cB$, where
\beq \label{cB}
\cB := \{x\in \Re^n:l \le x \le u \}.
\eeq

We now present an IHT method for solving problem \eqref{l0-min}. 

\gap

\noindent
{\bf Iterative hard thresholding method for \eqnok{l0-min}:}  \\ [5pt]
Choose an arbitrary $x^0\in \cB$. Set $k=0$. 
\begin{itemize}
\item[1)] Solve the subproblem 
\beq \label{subprob}
x^{k+1} \in \Arg\min\limits_{x\in \cB}\{f(x^k)+\nabla f(x^k)^T(x-x^k)+\frac{L}{2}\|x-x^k\|^2+\lambda \|x\|_0\}.
\eeq
\item[2)]
Set $k \leftarrow k+1$ and go to step 1). 
\end{itemize}
\noindent
{\bf end}

\vgap

\begin{remark}
The subproblem \eqref{subprob} has a closed form solution  given in \eqref{IHT}.
\end{remark}

\vgap

In what follows, we study the convergence of the above IHT method for \eqref{l0-min}.  Before 
proceeding,  we introduce some notations that will be used subsequently. Define
\beqa
\cB_I &:=& \{x\in \cB: x_I = 0\}, \ \ \ \forall I \subseteq \{1,\ldots,n\}, \label{BI} \\ [6pt]
\Pi_\cB(x) &:=& \arg\min \{\|y-x\|: y \in \cB\}, \ \ \ \forall x \in \Re^n, \nn \\ [6pt]
s_L(x)  &:=&  x - \frac{1}{L} \nabla f(x), \ \ \ \forall x \in \cB, \label{sL} \\ [6pt]
I(x) &:=& \{i: x_i = 0\},  \ \ \ \forall x \in \Re^n \label{Ix}
\eeqa
for some constant $L > L_f$. 

The following lemma establishes some properties of  the operators $s_L(\cdot)$ and $\Pi_\cB(s_L(\cdot))$, 
 which will be used subsequently.

\begin{lemma} \label{diff-xy}
For any $x$, $y \in \Re^n$, there hold:
\bi
\item[\rm (1)]
$|[s_L(x)]^2_i-[s_L(y)]^2_i| \le 4(\|x-y\|+|[s_L(y)]_i|)\|x-y\|$;
\item[\rm (2)]
$|[\Pi_{\cB}(s_L(x))-s_L(x)]^2_i - [\Pi_{\cB}(s_L(y))-s_L(y)]^2_i| 
\le 4(\|x - y\| + |[\Pi_{\cB}(s_L(y))-s_L(y)]_i|)\|x - y\|$.
\ei
\end{lemma}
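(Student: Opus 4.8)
The plan is to reduce both statements to the elementary factorization $a^{2}-b^{2}=(a-b)(a+b)$ combined with two Lipschitz-type bounds: first, that $s_{L}$ is $2$-Lipschitz on $\cB$, and second, that the residual map $w\mapsto \Pi_{\cB}(w)-w$ is non-expansive. For the first bound, note $s_{L}(x)-s_{L}(y)=(x-y)-\tfrac{1}{L}(\nabla f(x)-\nabla f(y))$, so by the Lipschitz continuity of $\nabla f$ on $\cB$ and $L>L_{f}$ we get $\|s_{L}(x)-s_{L}(y)\|\le(1+L_{f}/L)\|x-y\|\le 2\|x-y\|$. (Here I read the statement as applying to $x,y$ in the domain on which $s_{L}$, resp.\ $\Pi_{\cB}(s_{L}(\cdot))$, is defined, i.e.\ on $\cB$.)

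For part (1), fix a coordinate $i$ and write
\[
[s_{L}(x)]^{2}_{i}-[s_{L}(y)]^{2}_{i}=\bigl([s_{L}(x)]_{i}-[s_{L}(y)]_{i}\bigr)\bigl([s_{L}(x)]_{i}+[s_{L}(y)]_{i}\bigr).
\]
Bound the first factor in absolute value by $\|s_{L}(x)-s_{L}(y)\|\le 2\|x-y\|$, and the second by the triangle inequality, $|[s_{L}(x)]_{i}+[s_{L}(y)]_{i}|\le|[s_{L}(x)]_{i}-[s_{L}(y)]_{i}|+2|[s_{L}(y)]_{i}|\le 2\|x-y\|+2|[s_{L}(y)]_{i}|$. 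Multiplying the two bounds gives $2\|x-y\|\,(2\|x-y\|+2|[s_{L}(y)]_{i}|)=4(\|x-y\|+|[s_{L}(y)]_{i}|)\|x-y\|$, which is exactly the claimed inequality.

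For part (2) I would repeat the argument verbatim with $[s_{L}(\cdot)]_{i}$ replaced by $[\Pi_{\cB}(s_{L}(\cdot))-s_{L}(\cdot)]_{i}$; the only new ingredient needed is
\[
\bigl\|\bigl(\Pi_{\cB}(s_{L}(x))-s_{L}(x)\bigr)-\bigl(\Pi_{\cB}(s_{L}(y))-s_{L}(y)\bigr)\bigr\|\le\|s_{L}(x)-s_{L}(y)\|\le 2\|x-y\|.
\]
Since $\cB$ is a box, $\Pi_{\cB}$ acts coordinatewise via $w_{i}\mapsto \mathrm{med}(l_{i},w_{i},u_{i})$, so $w_{i}-[\Pi_{\cB}(w)]_{i}=\max(w_{i}-u_{i},0)+\min(w_{i}-l_{i},0)$ is a $1$-Lipschitz function of $w_{i}$; hence the residual map is non-expansive and the displayed bound follows. (Equivalently, one may invoke firm non-expansiveness of the projection onto a closed convex set, which gives $\|(I-\Pi_{\cB})(u)-(I-\Pi_{\cB})(v)\|\le\|u-v\|$.) With this in hand, the factorization-plus-triangle-inequality computation of part (1) carries over word for word. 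The only real point to get right is this non-expansiveness of $I-\Pi_{\cB}$; after that the two estimates are routine.
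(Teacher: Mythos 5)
Your proof is correct and follows essentially the same route as the paper's: the $2$-Lipschitz bound $\|s_L(x)-s_L(y)\|\le(1+L_f/L)\|x-y\|\le 2\|x-y\|$, the factorization $a^2-b^2=(a-b)(a+b)$ with the triangle-inequality estimate on the sum factor, and the non-expansiveness of $I-\Pi_\cB$ for part (2). The only difference is that you actually justify the non-expansiveness of $I-\Pi_\cB$ (which the paper merely asserts with ``it can be shown that''), a welcome addition.
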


\begin{proof}
(1) We observe that 
\beqa
\|s_L(x)-s_L(y)\| &=& \|x-y-\frac1L(\nabla f(x)-\nabla f(y))\| \ \le \ \|x-y\| + 
\frac1L\|\nabla f(x)-\nabla f(y)\|, \nn \\ 
&\le &   (1+\frac{L_f}{L})  \|x-y\| \ \le \ 2\|x-y\|. \label{diff-s}
\eeqa
It follows from \eqref{diff-s}  that 
\[
\ba{lcl}
|[s_L(x)]^2_i-[s_L(y)]^2_i| & = & |[s_L(x)]_i+[s_L(y)]_i| \cdot |[s_L(x)]_i-[s_L(y)]_i|, \\ [7pt]
&\le & (|[s_L(x)]_i-[s_L(y)]_i| + 2|[s_L(y)]_i|) \cdot |[s_L(x)]_i-[s_L(y)]_i|, \\ [7pt]
& \le & 4(\|x-y\|+|[s_L(y)]_i|)\|x-y\|.   
\ea
\]

(2) It can be shown that
\[
\|\Pi_{\cB}(x)-x +y - \Pi_{\cB}(y)\| \ \le \ \|x-y\|.
\]
Using this inequality and \eqref{diff-s}, we then have 
\[
\ba{l} 
|[\Pi_{\cB}(s_L(x))-s_L(x)]^2_i - [\Pi_{\cB}(s_L(y))-s_L(y)]^2_i|  \\ [5pt]
 \le  (|[\Pi_{\cB}(s_L(x))-s_L(x)]_i - [\Pi_{\cB}(s_L(y))-s_L(y)]_i|
+2 |\Pi_{\cB}(s_L(y))-s_L(y)]_i|) \\ 
\ \ \ \cdot |[\Pi_{\cB}(s_L(x))-s_L(x)]_i - [\Pi_{\cB}(s_L(y))-s_L(y)]_i|, \\ [5pt]
 \le (\|s_L(x) - s_L(y)\| +2 |[\Pi_{\cB}(s_L(y))-s_L(y)]_i|)\cdot\|s_L(x) - s_L(y)\|, \\ [5pt]
 \le 4(\|x - y\| + |[\Pi_{\cB}(s_L(y))-s_L(y)]_i|)\|x - y\|.
\ea 
\]
\end{proof}

\gap

The following lemma shows that for the sequence $\{x^k\}$, the magnitude of any nonzero component 
$x^k_i$ cannot be too small for $k \ge 1$.

\begin{lemma}
Let $\{x^k\}$ be generated by the above IHT method. Then, for all $k \ge 0$,
\beq \label{lower-bdd}
|x^{k+1}_j|  \ \ge \ \delta := \min\limits_{i \notin I_0} \delta_i \ > \ 0, \ \ \ \mbox{if} \  \ x^{k+1}_j \neq 0,
\eeq
where $I_0 = \{i: l_i=u_i=0\}$ and 
\beq \label{deltai}
\delta_i = \left\{\ba{ll}
\min(u_i,\sqrt{2\lambda/L}), & \mbox{if} \ l_i=0, \\ [4pt]
\min(-l_i,\sqrt{2\lambda/L}), & \mbox{if} \ u_i=0, \\ [4pt]
\min(-l_i,u_i,\sqrt{2\lambda/L}), & \mbox{otherwise},
\ea\right. \quad\quad\quad\forall i \in I_0.
\eeq
\end{lemma}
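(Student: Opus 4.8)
The plan is to exploit the coordinatewise separability of the subproblem \eqref{subprob}. Since $\cB$ is a box and $\|x\|_0=\sum_{i=1}^n\mathbf{1}(x_i\ne 0)$, the objective in \eqref{subprob} splits across the $n$ variables; completing the square in the quadratic part and dropping the terms not depending on the $i$th variable, the component $x^{k+1}_i$ is seen to be a minimizer of
\[
\psi_i(\xi)\ :=\ \tfrac{L}{2}\bigl(\xi-[s_L(x^k)]_i\bigr)^2+\lambda\,\mathbf{1}(\xi\ne 0)\qquad\text{over}\ \ \xi\in[l_i,u_i],
\]
with $s_L$ as in \eqref{sL}. Fix $k\ge 0$ and an index $j$ with $x^{k+1}_j\ne 0$. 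First I would note that $j\notin I_0$, since $l_j=u_j=0$ would make $\xi=0$ the only feasible value and force $x^{k+1}_j=0$.

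Next, set $c:=[s_L(x^k)]_j$ and $p:=\Pi_{[l_j,u_j]}(c)$, the projection of $c$ onto $[l_j,u_j]$. The infimum of $\psi_j$ over the nonzero feasible values $\xi\in[l_j,u_j]\setminus\{0\}$ is $\tfrac{L}{2}(p-c)^2+\lambda$, and it is attained there — necessarily at $\xi=p$ — precisely when $p\ne 0$. Comparing with $\psi_j(0)=\tfrac{L}{2}c^2$, the hypothesis $x^{k+1}_j\ne 0$ therefore forces $p\ne 0$, $x^{k+1}_j=p$, and
\[
\tfrac{L}{2}(p-c)^2+\lambda\ \le\ \tfrac{L}{2}c^2 .
\]
I would then split into the three cases for the projection onto an interval. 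If $c\in[l_j,u_j]$, then $p=c$, the displayed inequality reduces to $\lambda\le\tfrac{L}{2}c^2$, and $|x^{k+1}_j|=|c|\ge\sqrt{2\lambda/L}\ge\delta_j$. If $c>u_j$, then $p=u_j$, and $p\ne 0$ forces $u_j>0$, so $|x^{k+1}_j|=u_j$; symmetrically, if $c<l_j$, then $p=l_j$ and $l_j<0$, so $|x^{k+1}_j|=-l_j$. In these last two cases one reads off from \eqref{deltai} that $\delta_j\le u_j$, respectively $\delta_j\le-l_j$, so again $|x^{k+1}_j|\ge\delta_j$. Since $j\notin I_0$, at least one of $u_j,-l_j$ is strictly positive, whence $\delta_j>0$; thus $|x^{k+1}_j|\ge\delta_j\ge\min_{i\notin I_0}\delta_i=\delta>0$, which is \eqref{lower-bdd}. (In passing I would note that the quantifier in \eqref{deltai} should read $\forall\,i\notin I_0$, to be consistent with the definition of $\delta$.)

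The main obstacle, modest as it is, is establishing cleanly the implication ``$x^{k+1}_j\ne 0\ \Rightarrow\ x^{k+1}_j=p\ne 0$ and $\tfrac{L}{2}(p-c)^2+\lambda\le\tfrac{L}{2}c^2$''; this requires handling ties (when $\psi_j(0)=\psi_j(p)$, both $0$ and $p$ are minimizers, which is harmless since $|p|\ge\delta_j$ regardless) and the degenerate case $p=0$ (where the nonzero branch of $\psi_j$ has infimum $\tfrac{L}{2}c^2+\lambda$, never attained, so $\psi_j$ is minimized only at $0$ and $x^{k+1}_j\ne 0$ is impossible). The remaining steps are routine one-dimensional calculus; in particular, nothing beyond the definitions \eqref{sL}, \eqref{subprob}, \eqref{deltai} is used, and Lemma \ref{diff-xy} is not needed here.
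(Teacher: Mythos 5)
Your proof is correct and follows essentially the same route as the paper: both rest on the closed-form hard-thresholding solution of the separable subproblem and the threshold inequality $\tfrac{L}{2}(p-c)^2+\lambda\le\tfrac{L}{2}c^2$, which is exactly the paper's condition $[s_L(x^k)]_j^2-[\Pi_{\cB}(s_L(x^k))-s_L(x^k)]_j^2\ge 2\lambda/L$, followed by a routine case analysis on the projection onto $[l_j,u_j]$ (you derive the closed form from separability where the paper cites it, and you organize the cases by where $[s_L(x^k)]_j$ falls rather than by which of $l_j,u_j$ vanishes, but these differences are cosmetic). You are also right that the quantifier in \eqref{deltai} should read $\forall\, i\notin I_0$.
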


\begin{proof}
One can observe from \eqref{subprob} that for $i=1, \ldots, n$, 
\beq \label{IHT}
x^{k+1}_i = \left\{\ba{ll}
[\Pi_\cB(s_L(x^k))]_i,  & \ \mbox{if} \ [s_L(x^k)]^2_i-[\Pi_{\cB}(s_L(x^k))-s_L(x^k)]^2_i > \frac{2\lambda}{L}, 
\\ [7pt]
0, &  \ \mbox{if} \ [s_L(x^k)]^2_i-[\Pi_{\cB}(s_L(x^k))-s_L(x^k)]^2_i < \frac{2\lambda}{L}, \\ [7pt]
[\Pi_\cB(s_L(x^k))]_i \ \mbox{or} \ 0,  & \ \mbox{otherwise}     
\ea\right.
\eeq
(see, for example, \cite{LuZh12}).  Suppose that $j$ is an index such that 
$x^{k+1}_j \neq 0$. Clearly, $j \notin I_0$, where $I_0$ is define above. It follows from \eqref{IHT} that 
\beq \label{xki}
x^{k+1}_j = [\Pi_\cB(s_L(x^k))]_j \neq  0, \quad\quad [s_L(x^k)]^2_j-[\Pi_{\cB}(s_L(x^k))-s_L(x^k)]^2_j \ge \frac{2\lambda}{L}.
\eeq
The second relation of \eqref{xki} implies that $|[s_L(x^k)]_j| \geq \sqrt{2\lambda/L} $. In addition, by 
the first relation of \eqref{xki} and the definition of $\Pi_\cB$, we have
\beq \label{xj}
x^{k+1}_j=[\Pi_\cB(s_L(x^k))]_j = \min(\max([s_L(x^k)]_j,l_j),u_j) \neq 0.
\eeq
Recall that $j \notin I_0$. We next show that $|x^{k+1}_j| \ge \delta_j$ by considering three separate 
cases: i) $l_j=0$; ii) $u_j=0$; and iii) $l_j u_j \neq 0$.  For case i), it follows from \eqref{xj} that 
$[s_L(x^k)]_j \ge 0$ and $x^{k+1}_j=\min([s_L(x^k)]_j,u_j)$. This together with the relation 
$|[s_L(x^k)]_j| \geq \sqrt{2\lambda/L} $ and the definition of $\delta_j$ implies that 
$|x^{k+1}_j| \ge \delta_j$.  By the similar arguments, we can show that $|x^{k+1}_j| \ge \delta_j$ 
also holds for the other two cases. Then, it is easy to see that the conclusion of this lemma holds.
\end{proof}

\gap

We next establish that the sequence $\{x^k\}$ converges to a local minimizer of $\eqref{l0-min}$, and 
moreover, $F(x^k)$ converges to a local minimum value of \eqref{l0-min}.

\begin{theorem} \label{limit-thm}
Let $\{x^k\}$ be generated by the above IHT method. Then, $x^k$ converges to a local minimizer 
$x^*$ of problem \eqref{l0-min} and moreover, $I(x^k) \to I(x^*)$, $\|x^k\|_0 \to \|x^*\|_0$ 
and $F(x^k) \to F(x^*)$.
\end{theorem}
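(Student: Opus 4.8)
The plan is to exploit the two structural facts already at hand: the closed-form update \eqref{IHT} (so each step is a projected-gradient step on $f$ restricted to a fixed coordinate subspace, with certain coordinates zeroed out), and the uniform lower bound \eqref{lower-bdd} on the magnitude of the nonzero entries of $x^{k+1}$. First I would record the monotone decrease of $F(x^k)$: since $x^{k+1}$ is a minimizer of the model function in \eqref{subprob} and $L>L_f$, the standard descent estimate gives $F(x^{k+1}) \le f(x^k)+\nabla f(x^k)^T(x^{k+1}-x^k)+\tfrac{L}{2}\|x^{k+1}-x^k\|^2+\lambda\|x^{k+1}\|_0 \le F(x^k) - \tfrac{L-L_f}{2}\|x^{k+1}-x^k\|^2$. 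Combined with the fact that $f$ is bounded below on $\cB$ and $\|x\|_0\ge 0$, this shows $\{F(x^k)\}$ converges and $\|x^{k+1}-x^k\|\to 0$.

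Next I would pin down the support. The key point is that $\|x\|_0$ takes only integer values in $\{0,\dots,n\}$, so I want to show the index set $I(x^k)$ (equivalently the support) is eventually constant. This is where \eqref{lower-bdd} does the work: because every nonzero component of $x^{k+1}$ has magnitude at least $\delta>0$, while $\|x^{k+1}-x^k\|\to 0$, a component cannot move from ``zero'' to ``$\ge\delta$'' or vice versa across consecutive iterates once $\|x^{k+1}-x^k\|<\delta$. Hence for $k$ large the support stabilizes; call the eventual index set $I_*$ and write $\cB_{I_*}$ as in \eqref{BI}. I would then argue that from that point on the iteration is exactly the projected-gradient method of Section \ref{tech} applied to $\min\{f(x): x\in\cB_{I_*}\}$ — the $\lambda\|x\|_0$ term is a constant $\lambda|I_*^c|$ on this face and the hard-threshold in \eqref{IHT} reduces to the plain projection onto $\cB_{I_*}$. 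Therefore Theorem \ref{sd-lemma}(ii) gives $x^k\to x^*$ for some minimizer $x^*$ of $f$ over $\cB_{I_*}$, and $x^*$ has support contained in $I_*^c$ (indeed equal to it, since each nonzero coordinate stays $\ge\delta$ in the limit), so $I(x^k)\to I(x^*)=I_*$, whence $\|x^k\|_0\to\|x^*\|_0$ and $F(x^k)\to F(x^*)$ by continuity of $f$.

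It remains to verify that $x^*$ is a \emph{local} minimizer of the full problem \eqref{l0-min}, not merely of the restricted problem on the face. For this I would take a neighborhood of $x^*$ of radius less than $\delta$ and less than $\min_{i\in I_*^c}|x^*_i|$: any feasible $y$ in this neighborhood with $\|y\|_0\le\|x^*\|_0$ must have support exactly $I_*^c$ (it cannot drop a coordinate of $x^*$, which is bounded away from zero, without also picking one up, which the radius forbids), so $F(y)=f(y)+\lambda|I_*^c|\ge f(x^*)+\lambda|I_*^c| = F(x^*)$ using optimality of $x^*$ on $\cB_{I_*}$; and any $y$ with $\|y\|_0>\|x^*\|_0$ has $F(y)\ge f(y)+\lambda(\|x^*\|_0+1)$, which exceeds $F(x^*)$ once the neighborhood is small enough that $|f(y)-f(x^*)|<\lambda$.

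The main obstacle I anticipate is the finite-support-stabilization argument: one has to be a little careful that the threshold test in \eqref{IHT} is genuinely governed by the quantity controlled in \eqref{lower-bdd} and that the ``boundary'' case (equality in the threshold, where $x^{k+1}_i$ may be chosen either $0$ or nonzero) does not cause oscillation — here Lemma \ref{diff-xy} is the right tool, since it shows the quantities $[s_L(x^k)]_i^2-[\Pi_\cB(s_L(x^k))-s_L(x^k)]_i^2$ change by $O(\|x^{k+1}-x^k\|)$, so once $\|x^{k+1}-x^k\|$ is small, if the threshold is attained strictly at one iterate it stays on the same side at the next, and the ambiguous-equality situation, combined with the $\delta$-gap, still cannot produce a support change. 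Everything else is a routine assembly of Theorem \ref{sd-lemma}, the descent inequality, and continuity.
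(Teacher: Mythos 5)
Your proposal is correct and follows essentially the same route as the paper: the descent inequality $F(x^k)-F(x^{k+1})\ge\tfrac{L-L_f}{2}\|x^{k+1}-x^k\|^2$ forces $\|x^{k+1}-x^k\|\to 0$, the lower bound \eqref{lower-bdd} then freezes the support $I(x^k)$ for large $k$, and Theorem \ref{sd-lemma}(ii) applied to $\min\{f(x):x\in\cB_I\}$ yields convergence and the remaining claims. Your explicit neighborhood argument for local minimality (radius below $\delta$, $\min_{i\notin I}|x^*_i|$, and small enough that $|f(y)-f(x^*)|<\lambda$) fills in a step the paper dismisses as ``not hard to see,'' and the concern about Lemma \ref{diff-xy} in your last paragraph is unnecessary here, since the $\delta$-gap argument alone rules out support oscillation.
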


\begin{proof}
Since $\nabla f$ is Lipschitz continuous with constant $L_f$, we have 
\[
f(x^{k+1}) \ \le \  f(x^k)+\nabla f(x^k)^T(x-x^k)+\frac{L_f}{2}\|x^{k+1}-x^k\|^2.
\] 
Using this inequality, the fact that $L > L_f$, and \eqref{subprob}, we obtain that 
\[
\ba{lcl}
F(x^{k+1}) &=& f(x^{k+1})+ \lambda \|x^{k+1}\|_0 \ \le \ \overbrace{ f(x^k)+\nabla 
f(x^k)^T(x^{k+1}-x^k)+\frac{L_f}{2}\|x^{k+1}-x^k\|^2 + \lambda \|x^{k+1}\|_0}^a, \nn \\ [14pt]
&\le & \underbrace{f(x^k)+\nabla f(x^k)^T(x^{k+1}-x^k)+\frac{L}{2}\|x^{k+1}-x^k\|^2 + 
\lambda \|x^{k+1}\|_0 }_b  \nn \\ [14pt]
&\le &  f(x^k)+\lambda \|x^k\|_0 \ = \ F(x^k), 
\ea
\]
where the last inequality follows from \eqref{subprob}.  The above inequality  implies that $\{F(x^k)\}$ 
is nonincreasing and moreover, 
\beq \label{diff-seq}
F(x^k) - F(x^{k+1}) \ \ge \ b-a \ = \  \frac{L-L_f}{2} \|x^{k+1}-x^k\|^2.
\eeq 
By the assumption, we know that $f$ is bounded below in $\cB$. It then follows that $\{F(x^k)\}$ is 
bounded below. Hence, $\{F(x^k)\}$ converges to a finite value as $k \to \infty$, which together with 
\eqref{diff-seq} implies that 
\beq \label{diff-lim}
\lim_{k \to \infty}\|x^{k+1}-x^k\| = 0.
\eeq
Let $I_k = I(x^k)$, where $I(\cdot)$ is defined in \eqref{Ix}.  In view of \eqref{lower-bdd}, we observe 
that 
\beq \label{x-change}
\|x^{k+1}-x^k\| \ge \delta \ \ \mbox{if} \ \ I_k \neq I_{k+1}.
\eeq
This together with \eqref{diff-lim} implies that $I_k$ does not change when $k$ is sufficient large. 
Hence, there exist some $K \ge 0$ and $I \subseteq \{1,\ldots,n\}$ such that  $I_k = I$ for all 
$k \geq K$. Then one can observe from \eqref{subprob} that 
\[
x^{k+1} = \arg\min\limits_{x\in \cB_I}\{f(x^k)+\nabla f(x^k)^T(x-x^k)+\frac{L}{2}\|x-x^k\|^2\}, \ \ \ 
\forall k > K,
\]
where $\cB_I$ is defined in \eqref{BI}. 
It follows from Lemma \ref{sd-lemma} that $x^k \to x^*$, where 
\beq \label{loc-min}
x^* \in \Arg\min \{f(x): x\in \cB_I\} .
\eeq  
 It is not hard to see from \eqref{loc-min} that $x^*$ is a local minimizer of \eqref{l0-min}. 
In addition, we know from \eqref{lower-bdd} that $|x^k_i| \ge \delta$ for 
$k > K$ and $i \notin I$. It yields $|x^*_i| \ge \delta$ for $i \notin I$ and $x^*_i = 0$ for $i \in I$. 
Hence, $I(x^k) = I(x^*)=I$ for all $k > K$, which clearly implies that $\|x^k\|_0 = \|x^*\|_0$ for every  
$k > K$. By continuity of $f$, we have $f(x^k) \to f(x^*)$. It then follows that 
\[
F(x^k) = f(x^k) + \lambda \|x^k\|_0 \to f(x^*)+\lambda \|x^*\|_0 = F(x^*).
\]
\end{proof}

\gap

As shown in Theorem \ref{limit-thm}, $x^k \to x^*$ for some local minimizer $x^*$ 
of \eqref{l0-min} and $F(x^k) \to F(x^*)$. Our next aim is to establish the iteration complexity 
of the IHT method for finding an $\epsilon$-local-optimal solution $x_\eps \in \cB$ of \eqref{l0-min}  
satisfying $F(x_\eps) \le F(x^*)+\epsilon$ and $I(x_\eps)=I(x^*)$. Before proceeding, we define 
\beqa
\alpha &=& \min\limits_{I \subseteq \{1,\ldots,n\}} \left\{\min\limits_{i} \left|[s_L(x^*)]^2_i
-[\Pi_{\cB}(s_L(x^*))-s_L(x^*)]^2_i -\frac{2\lambda}{L}\right|: \ x^* \in \Arg\min \{f(x): \ x\in \cB_I\}\right\}, \label{alpha} \\ [6pt]
\beta &=& \max\limits_{I \subseteq \{1,\ldots,n\}} \left\{\max\limits_{i} 
|[s_L(x^*)]_i|+|\Pi_{\cB}(s_L(x^*))-s_L(x^*)]_i|: \ x^* \in \Arg\min \{f(x): \ x\in \cB_I\}\right\}. \label{beta}
\eeqa

%
%
%
%

\begin{theorem} \label{complexity}
Assume that $f$ is a smooth strongly convex function with modulus $\sigma>0$. Suppose that 
$L>L_f$ is chosen such that $\alpha >0$.  Let $\{x^k\}$ be generated by the above IHT method, 
$I_k = I(x^k)$ for all $k$, $x^* =\lim_{k \to \infty} x^k$, and $F^* = F(x^*)$. Then, for any given 
$\eps>0$, the following statements hold:
\bi
\item[\rm (i)] The number changes of $I_k$ is at most 
$\left\lfloor \frac{2(F(x^0)-F^*)}{(L-L_f)\delta^2} \right\rfloor$. 
\item[\rm (ii)] The total number of iterations by the IHT method for finding an $\eps$-local-optimal 
solution $x_\eps \in \cB$ satisfying $I(x_\eps)=I(x^*)$ and $F(x_\eps) \le F^*+\eps$ is at most 
$2\lceil  L/\sigma\rceil \log \frac{\theta}{\eps}$,
where 
\beqa
&\theta = (F(x^0) - F^*)2^{\frac{\omega+3}{2}},  \quad 
\omega= \max\limits_t \left\{(d- 2 c) t -ct^2: 
0 \le t \le  \left\lfloor \frac{2(F(x^0)-F^*)}{(L-L_f)\delta^2} \right\rfloor\right\}, \label{thetaw}\\ [6pt]
&c=\frac{(L-L_f)\delta^2}{2(F(x^0)-\underline{F}^*)}, \quad\quad\ \gamma = 
\sigma(\sqrt{2\alpha+\beta^2}-\beta)^2/32, \label{gammac}\\ [6pt] 
&d = 2 \log(F(x^0) - \underline{F}^*) +4- 2 \log \gamma+ c.  \nn 
\eeqa
\ei
\end{theorem}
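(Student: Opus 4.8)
The plan is to decompose the run of the IHT method into consecutive ``phases,'' where a new phase begins each time the active index set $I_k$ changes, and to control both the number of phases and the cost of the final phase separately. For part (i), I would start from the monotonicity estimate \eqref{diff-seq}, namely $F(x^k)-F(x^{k+1})\ge \frac{L-L_f}{2}\|x^{k+1}-x^k\|^2$, together with the ``no small jumps'' bound \eqref{x-change}, which says $\|x^{k+1}-x^k\|\ge\delta$ whenever $I_k\neq I_{k+1}$. Summing \eqref{diff-seq} over all iterations and discarding all but the index-changing steps gives that each change of $I_k$ costs at least $\frac{(L-L_f)\delta^2}{2}$ in objective value; since $\{F(x^k)\}$ is nonincreasing, bounded below by $F^*$, and starts at $F(x^0)$, the total number of changes cannot exceed $\frac{2(F(x^0)-F^*)}{(L-L_f)\delta^2}$, and being an integer it is at most the floor of this quantity. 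This is the easy half.

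For part (ii) I would argue as follows. By Theorem \ref{limit-thm} there is a final index set $I$ and an index $K$ with $I_k=I$ for all $k\ge K$; after iteration $K$ the IHT iteration reduces to the projected gradient method for $\min\{f(x):x\in\cB_I\}$, whose objective is strongly convex with modulus $\sigma$, so Theorem \ref{sd-thm} applies and yields an $\eps$-optimal point within $2\lceil L/\sigma\rceil\lceil\log((f(x^{K})-f^*_I)/\eps)\rceil$ further iterations, where $f^*_I=f(x^*)$. The work therefore splits into (a) bounding $K$, the iteration at which the active set stabilizes, and (b) bounding the residual $f(x^{K})-F^*$ in terms of the data. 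For (b), the key point is that once $I_k$ has settled to $I$, the definitions \eqref{alpha}--\eqref{beta} of $\alpha$ and $\beta$, combined with Lemma \ref{diff-xy}, guarantee that $I_{k+1}=I$ as soon as $\|x^k-x^*\|$ is small enough: indeed, for the active set to be preserved we need the thresholding quantities $[s_L(x^k)]_i^2-[\Pi_\cB(s_L(x^k))-s_L(x^k)]_i^2$ to stay on the correct side of $2\lambda/L$, and by part (2) of Lemma \ref{diff-xy} the perturbation of these quantities from their values at $x^*$ is at most $4(\|x^k-x^*\|+\beta)\|x^k-x^*\|$; setting this $\le\alpha$ and solving the quadratic gives the threshold $\|x^k-x^*\|\le\sqrt{2\alpha+\beta^2}-\beta$, which explains the constant $\gamma=\sigma(\sqrt{2\alpha+\beta^2}-\beta)^2/32$.

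The main obstacle — and the part requiring the most bookkeeping — is stitching these estimates together to produce the explicit constant $\theta$ and the exponent $\omega$. The idea is that each phase runs the projected-gradient recursion \eqref{suff-reduct}, so within a phase the objective gap decays geometrically, and a phase ends (an index change occurs) only after the gap has dropped below the level needed to make $\|x^k-x^*\|\le\sqrt{2\alpha+\beta^2}-\beta$; by strong convexity, $\frac{\sigma}{2}\|x^k-x^*\|^2\le f(x^k)-f^*$, so a phase contributes a controlled number of iterations expressed logarithmically in the ratio of its entry gap to $\gamma$. Each index change also consumes at least $\frac{(L-L_f)\delta^2}{2}$ of objective value, so after $t$ index changes the remaining gap has shrunk by an additive amount $\ge t\cdot\frac{(L-L_f)\delta^2}{2}$; writing this multiplicatively against the initial gap $F(x^0)-\underline{F}^*$ introduces the factor $c=\frac{(L-L_f)\delta^2}{2(F(x^0)-\underline{F}^*)}$ and a term $-ct^2$ from summing the per-phase logarithmic costs against the linearly shrinking budget, while $d$ collects the constant and the $-2\log\gamma$ contributions. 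Maximizing the resulting quadratic $(d-2c)t-ct^2$ over $t$ in the admissible range $\{0,1,\dots,\lfloor\frac{2(F(x^0)-F^*)}{(L-L_f)\delta^2}\rfloor\}$ from part (i) yields $\omega$, hence $\theta=(F(x^0)-F^*)2^{(\omega+3)/2}$, and the total iteration count $2\lceil L/\sigma\rceil\log(\theta/\eps)$ follows by feeding $f(x^K)-F^*\le\theta$ into Theorem \ref{sd-thm}. I would be careful with the ceilings in Theorem \ref{sd-thm} and with the fact that the final phase need not trigger an index change, so the last geometric decay must be run all the way down to $\eps$ rather than to $\gamma$.
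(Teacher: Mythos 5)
Your proposal follows essentially the same route as the paper's proof: part (i) by summing the per-change decrease \eqref{diff-seq} against the jump bound \eqref{x-change}, and part (ii) by the same phase decomposition, a per-phase length bound obtained from Theorem \ref{sd-thm} combined with the $\alpha,\beta$ stability argument of Lemma \ref{diff-xy} (the paper runs this as a contradiction), and the summation that produces the quadratic $(d-2c)t-ct^2$ maximized to give $\omega$, with the final phase run down to $\eps$. The only slip is the stability radius: solving $4(2r+\beta)r\le\alpha$ gives $r\le(\sqrt{2\alpha+\beta^2}-\beta)/4$ rather than $\sqrt{2\alpha+\beta^2}-\beta$, which is precisely where the factor $32$ in $\gamma=\sigma(\sqrt{2\alpha+\beta^2}-\beta)^2/32$ comes from.
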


\begin{proof}
(i) As shown in Theorem \ref{limit-thm}, $I_k$ only changes for a finite number of times. 
Assume that $I_k$ only changes at $k=n_1+1, \ldots, n_J+1$, that is,   
\beq \label{chg-I}
I_{n_{j-1}+1} = \cdots = I_{n_j} \neq I_{n_j+1} = \cdots = I_{n_{j+1}}, \ j=1, \ldots, J-1,   
\eeq
where $n_0=0$. 

We next bound $J$, i.e., the  total number of changes of $I_k$. In view of \eqref{x-change} 
and \eqref{chg-I}, one can observe that
\[
\|x^{n_j+1}-x^{n_j}\| \ge \delta, \  \  \  j=1, \ldots, J,
\]
which together with \eqref{diff-seq} implies that
\beq \label{diff-Fx}
F(x^{n_j}) - F(x^{n_j+1}) \ge \frac12(L-L_f)\delta^2, \ \ \ j=1,\ldots, J.
\eeq
Summing up these inequalities and using the monotonicity of $\{F(x^k)\}$, we have
\beq \label{ineq-J}
\frac12(L-L_f)\delta^2 J \ \le \  F(x^{n_1}) - F(x^{n_J+1}) \ \le \ F(x^0)-F^*,
\eeq
and hence 
\beq \label{bound-J}
J \ \le \ \left\lfloor \frac{2(F(x^0)-F^*)}{(L-L_f)\delta^2} \right\rfloor.
\eeq

(ii) Let $n_j$ be defined as above for $j=1, \ldots, J$. We first show that  
\beq \label{diff-nj}
n_j - n_{j-1}  \ \le \ 2+ 2\lceil  L/\sigma\rceil 
\left\lceil \log\left(F(x^0) - (j-1)(L-L_f) \delta^2/2 - \underline{F}^*\right) - \log \gamma \right\rceil, 
\ \ \  j=1,\ldots, J,
\eeq 
where $\underline F^*$ and $\gamma$ are defined in \eqref{l0-min} and \eqref{gammac}, respectively.   
Indeed, one can observe from \eqref{subprob} that 
\[
x^{k+1} = \arg\min\limits_{x\in \cB}\{f(x^k)+\nabla f(x^k)^T(x-x^k)+\frac{L}{2}\|x-x^k\|^2: x_{I_{k+1}}=0\}.
\]
Therefore, for $j=1, \ldots, J$ and $k=n_{j-1}, \ldots, n_j-1$, 
\[
x^{k+1} = \arg\min\limits_{x\in \cB}\{f(x^k)+\nabla f(x^k)^T(x-x^k)+\frac{L}{2}\|x-x^k\|^2: x_{I_{n_j}}=0\}.
\]
We arbitrarily choose $1 \le j \le J$.
Let $\bx^*$ (depending on $j$) denote the optimal solution of
\beq \label{aux-prob}
\min\limits_{x\in \cB}\{f(x): x_{I_{n_j}}=0\}.
\eeq
One can observe that
\[
\|\bx^*\|_0 \ \le \  \|x^{n_{j-1}+1}\|_0.
\]
Also,  it follows from \eqref{diff-Fx} and  the monotonicity of $\{F(x^k)\}$  that 
\beq \label{bdd-Fx}
F(x^{n_j+1}) \ \le \ F(x^0) - \frac{j}{2}(L-L_f) \delta^2, \ \ \ j=1,\ldots, J.
\eeq
Using these relations and the fact that $F(\bx^*) \ge \underline{F}^*$,  we have
\beqa
f(x^{n_{j-1}+1}) - f(\bx^*) &=& F(x^{n_{j-1}+1}) - \lambda \|x^{n_{j-1}+1}\|_0 - F(\bx^*)+\lambda \|\bx^*\|_0, \nn \\ [6pt]
& \le & F(x^0) - \frac{j-1}{2}(L-L_f) \delta^2 - \underline{F}^*. \label{bdd-Fxj}
\eeqa
Suppose for a contradiction that \eqref{diff-nj} does not hold for some $1 \le j \le J$. Hence, we 
have 
\[
n_j - n_{j-1} > 2+ 2\lceil  L/\sigma\rceil 
\left\lceil \log\left(F(x^0) - (j-1)(L-L_f) \delta^2/2 - \underline{F}^*\right) - \log \gamma \right\rceil.
\]
This inequality and \eqref{bdd-Fxj} yields 
\[
n_j - n_{j-1} > 2+ 2\lceil  L/\sigma\rceil 
\left\lceil \log\frac{f(x^{n_{j-1}+1})-f(\bx^*)}{\gamma}\right\rceil.
\]
Using the strong convexity of $f$ and applying Theorem \ref{sd-thm} (ii) to \eqref{aux-prob} 
with $\epsilon=\gamma$, we obtain that
\[
\frac{\sigma}{2} \|x^{n_j}-\bx^*\|^2 \le f(x^{n_j}) - f(\bx^*) < \frac{\sigma}{32}(\sqrt{2\alpha+\beta^2}-\beta)^2.
\]
It implies that
\beq \label{xdiff-bdd}
\|x^{n_j}-\bx^*\| < \frac{\sqrt{2\alpha+\beta^2}-\beta}{4}.
\eeq
Using \eqref{xdiff-bdd}, Lemma \ref{diff-xy} and the definition of $\beta$, we have
\beqa
&& |[s_L(x^{n_j})]^2_i-[s_L(\bx^*)]^2_i - [\Pi_{\cB}(s_L(x^{n_j}))-s_L(x^{n_j})]^2_i + [\Pi_{\cB}(s_L(\bx^*))-s_L(\bx^*)]^2_i| \nn  \\ [5pt]
&& \le  |[s_L(x^{n_j})]^2_i-[s_L(\bx^*)]^2_i| + |[\Pi_{\cB}(s_L(x^{n_j}))-s_L(x^{n_j})]^2_i - [\Pi_{\cB}(s_L(\bx^*))-s_L(\bx^*)]^2_i| \nn \\ [5pt]
&&  \le \ 4(2\|x^{n_j}-\bx^*\|+\beta)\|x^{n_j}-\bx^*\| \ < \ \alpha, \label{diff-sp}
\eeqa
where the last inequality is due to \eqref{xdiff-bdd}. Let 
\[
I^* = \left\{i: [s_L(\bx^*)]^2_i-[\Pi_{\cB}(s_L(\bx^*))-s_L(\bx^*)]^2_i < \frac{2\lambda}{L}\right\}
\]
and let $\bar I^* = \{1,\ldots,n\}\setminus I^*$. Since $\alpha>0$, we know that 
\[
[s_L(\bx^*)]^2_i-[\Pi_{\cB}(s_L(\bx^*))-s_L(\bx^*)]^2_i > \frac{2\lambda}{L}, \ \ \ \forall i \in \bar I^*.
\]
It then follows from \eqref{diff-sp} and the definition of $\alpha$ that
\[
\ba{l}
[s_L(x^{n_j})]^2_i- [\Pi_{\cB}(s_L(x^{n_j}))-s_L(x^{n_j})]^2_i < \frac{2\lambda}{L},  \ \ \ \forall i \in I^*, \\ [6pt]
[s_L(x^{n_j})]^2_i- [\Pi_{\cB}(s_L(x^{n_j}))-s_L(x^{n_j})]^2_i > \frac{2\lambda}{L}, \ \ \ \forall i \in \bar I^*.
\ea
\]
Observe that $[\Pi_\cB(s_L(x^{n_j}))]_i \neq 0$ for all $i \in \bar I^*$. This fact together with \eqref{IHT} implies that
\[  
x^{n_j+1}_i =0, \ i \in I^* \ \ \mbox{and} \  \ x^{n_j+1}_i \neq 0, \ i \in \bar I^*.
\]
By a similar argument, one can show that 
\[
x^{n_j}_i =0, \ i \in I^* \  \ \mbox{and} \ \  x^{n_j}_i \neq 0, \ i \in \bar I^*.
\]
Hence, $I_{n_j} = I_{n_j+1} = I^*$, which is a contradiction to \eqref{chg-I}. We thus 
conclude that \eqref{diff-nj} holds.

Let $N_\eps$ denote the total number of iterations for finding an $\eps$-local-optimal solution 
$x_\eps \in \cB$ by the IHT method satisfying $I(x_\eps)=I(x^*)$ and $F(x_\eps) \le F^*+\eps$. We 
next establish an upper bound for $N_\eps$. Summing up the inequality \eqref{diff-nj} for 
$j=1,\ldots, J$,  we obtain that
\[
n_J \le  \sum^J\limits_{j=1} \left\{2+ 2\lceil  L/\sigma\rceil 
\left\lceil \log(F(x^0) - \frac{j-1}{2}(L-L_f) \delta^2 - \underline{F}^*) - \log \gamma \right\rceil\right\}.
\]
Using this inequality, \eqref{ineq-J},  and the facts that $L \ge \sigma$ and $\log(1-t) \le -t $ 
for all $t\in (0,1)$, we have 
 \beqa
 n_J &\le&  \sum^J\limits_{j=1} \left[2+ 2\lceil  L/\sigma \rceil 
\left(\log(F(x^0) - \frac{j-1}{2}(L-L_f) \delta^2 - \underline{F}^*) - \log \gamma +1\right)\right], \nn \\ 
& \le &   \sum^J\limits_{j=1} \left[2+ 2\lceil  L/\sigma\rceil 
\left(\log(F(x^0) - \underline{F}^*) - \frac{(L-L_f)\delta^2}{2(F(x^0)-\underline{F}^*)}(j-1) - \log \gamma +1\right)\right], \nn
\\ 
&\le& \lceil  L/\sigma \rceil\left[\underbrace{\left(2 \log(F(x^0) - \underline{F}^*) +4- 2 \log \gamma
+ \frac{(L-L_f)\delta^2}{2(F(x^0)-\underline{F}^*)}\right)}_d  J - \underbrace{\frac{(L-L_f)\delta^2}{2(F(x^0)-\underline{F}^*)}}_c J^2\right]. 
\label{n_J}
\eeqa
 By the definition of $n_J$, we observe that after $n_J+1$ iterations, the IHT method becomes the 
projected gradient method applied to the problem 
\[
x^* = \arg\min\limits_{x\in \cB}\{f(x): x_{I_{n_J+1}}=0\}.
\]
In addition, we know from Theorem \ref{limit-thm} that $I(x^k) = I(x^*)$ for all $k > n_J$. Hence, 
$f(x^k)-f(x^*)=F(x^k)-F^*$ when $k > n_J$. Using these facts and Theorem \ref{sd-thm} (ii),  
we have
\[
N_\eps \le n_J + 1+2\lceil  L/\sigma\rceil 
\left\lceil \log\frac{F(x^{n_J+1})-F^*}{\epsilon}\right\rceil.
\]
Using this inequality, \eqref{bdd-Fx},  \eqref{n_J} and the facts that $F^* \ge \underline{F}^*$, $L \ge \sigma$ and
 $\log(1-t) \le -t $ for all $t\in (0,1)$, we obtain that 
 \beqas
N_\eps &\le& n_J + 1+2\lceil  L/\sigma\rceil 
\left(\log(F(x^0) - \frac{J}{2}(L-L_f) \delta^2 - F^*) +1 - \log \eps\right), \\
&\le& n_J + \lceil  L/\sigma\rceil 
\left(2 \log(F(x^0) - F^*) - \frac{(L-L_f)\delta^2J}{F(x^0)-F^*} +3 -2 \log \eps\right) \\
&\le&\lceil  L/\sigma\rceil \left[ (d- 2 c) J -cJ^2 +2 \log(F(x^0) - F^*)+3 -2 \log \eps\right],
\eeqas
which together with \eqref{bound-J} and \eqref{thetaw}  implies that 
\[
N_\eps \ \le \ 2\lceil  L/\sigma\rceil \log \frac{\theta}{\eps}.
\]
\end{proof}

\gap

The iteration complexity given in Theorem \ref{complexity} is based on the assumption that $f$ is  
strongly convex in $\cB$. We next consider a case where $\cB$ is bounded and $f$ is convex but not 
strongly convex. We will establish the iteration complexity of finding an $\epsilon$-local-optimal 
solution  of \eqref{l0-min} by the IHT method applied to a perturbation of \eqref{l0-min}  obtained 
from adding a ``small'' strongly convex regularization term to $f$.

Consider a perturbation of \eqref{l0-min} in the form of
\beq \label{pert-l0}
{\underline{F}}^*_\nu := \min\limits_{x\in\cB} \{F_\nu(x) := f_\nu(x) + \lambda \|x\|_0\},
\eeq
where $\nu >0$ and 
\[
f_\nu(x) := f(x) + \frac{\nu}{2}\|x\|^2.
\]
One can easily see that $f_\nu$ is strongly convex in $\cB$ with modulus $\nu$ and moreover  
$\nabla f_\nu$ is Lipschitz continuous with constant $L_\nu$, where
\beq \label{Lnu}
L_\nu = L_f + \nu. 
\eeq

We next establish the iteration complexity of finding an $\epsilon$-local-optimal solution  of 
\eqref{l0-min} by the IHT method applied to \eqref{pert-l0}. Given any $L>0$, let $s_L$, 
$\alpha$ and $\beta$ be defined according to \eqref{sL}, \eqref{alpha} and \eqref{beta}, respectively, 
by replacing $f$ by $f_\nu$, and let $\delta$ be defined in \eqref{lower-bdd}.


\begin{theorem} 
Suppose that $\cB$ is bounded and $f$ is convex but not strongly convex. Let $\eps >0$ be 
arbitrarily given, $D = \max\{\|x\|: x \in \cB\}$, $\nu = \epsilon/D^2$, and $L > L_\nu$ be 
chosen such that $\alpha >0$. Let $\{x^k\}$ be generated by the IHT method applied to 
\eqref{pert-l0}, and let $x^* =\lim_{k \to \infty} x^k$, $F^*_\nu = F_\nu(x^*)$ and $F^* = \min\{F(x): 
x\in \cB_{I^*}\}$, where $I^*=\{i: x^*_i=0\}$.  Then, the total number of iterations by the IHT method 
for finding an $\eps$-local-optimal solution $x_\eps \in \cB$ satisfying $F(x_\eps) \le F^*+\eps$ 
is at most $2\left\lceil  \frac{D^2L_f}{\eps}+1\right\rceil \log \frac{2\theta}{\eps}$, where 
\beqas
&\theta = (F_\nu(x^0) - F_\nu^*)2^{\frac{\omega+3}{2}},  \quad 
\omega = \max\limits_t \left\{(d- 2 c) t -ct^2: 
0 \le t \le  \left\lfloor \frac{2(F_\nu(x^0)-F_\nu^*)}{(L-L_\nu)\delta^2} 
\right\rfloor\right\}, \\ [6pt]
&  c=\frac{(L-L_\nu)\delta^2}{2(F_\nu(x^0)-{\underline{F}}^*_\nu)}, \quad\quad \gamma = 
\nu(\sqrt{2\alpha+\beta^2}-\beta)^2/32, \\ [6pt]
& d = 2 \log(F_\nu(x^0) - {\underline{F}}^*_\nu) +4- 2 \log \gamma+ c.  
\eeqas
\end{theorem}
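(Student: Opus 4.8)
The plan is to reduce the not-strongly-convex case to the strongly-convex case already handled in Theorem \ref{complexity}. The key observation is that $f_\nu(x) = f(x) + \frac{\nu}{2}\|x\|^2$ is strongly convex with modulus $\nu$ and has Lipschitz constant $L_\nu = L_f + \nu$, so running the IHT method on \eqref{pert-l0} is literally an instance of the IHT method analyzed in Theorem \ref{complexity}, with $f$ replaced by $f_\nu$, $L_f$ replaced by $L_\nu$, $\sigma$ replaced by $\nu$, and $\underline F^*$ replaced by $\underline F^*_\nu$. So the first step is simply to invoke Theorem \ref{complexity} applied to \eqref{pert-l0}: it tells us that in at most $2\lceil L/\nu \rceil \log \frac{\theta}{\eps'}$ iterations (with $\theta$, $\omega$, $c$, $\gamma$, $d$ as defined in the statement, i.e.\ the ``$\nu$-decorated'' versions of \eqref{thetaw}--\eqref{gammac}) the IHT method produces a point $x_\eps \in \cB$ with $I(x_\eps) = I(x^*) = I^*$ and $F_\nu(x_\eps) \le F^*_\nu + \eps'$, where $F^*_\nu = \min\{F_\nu(x): x \in \cB_{I^*}\}$, for any target accuracy $\eps' > 0$.

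The second step is to translate an $\eps'$-local-optimality statement for $F_\nu$ into an $\eps$-local-optimality statement for $F$, choosing $\eps'$ appropriately. Since $0 \le \frac{\nu}{2}\|x\|^2 \le \frac{\nu}{2}D^2 = \frac{\eps}{2}$ for all $x \in \cB$ by the choice $\nu = \eps/D^2$, we have the uniform bound $0 \le F_\nu(x) - F(x) \le \eps/2$ on $\cB$. Now if $x_\eps$ satisfies $I(x_\eps) = I^*$ and $F_\nu(x_\eps) \le F^*_\nu + \eps'$, then for $\bx \in \Arg\min\{f(x) : x \in \cB_{I^*}\}$ (so that $F(\bx) = F^*$, since all points of $\cB_{I^*}$ share the same $\|x\|_0 \le \|x_\eps\|_0$... more carefully, $F^* = \min\{F(x): x \in \cB_{I^*}\}$ as defined) one estimates
\[
F(x_\eps) \le F_\nu(x_\eps) \le F^*_\nu + \eps' \le F_\nu(\bx) + \eps' \le F(\bx) + \frac{\eps}{2} + \eps' = F^* + \frac{\eps}{2} + \eps',
\]
where the third inequality uses that $F^*_\nu = \min\{F_\nu(x): x \in \cB_{I^*}\} \le F_\nu(\bx)$. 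Hence choosing $\eps' = \eps/2$ gives $F(x_\eps) \le F^* + \eps$, which is exactly the desired $\eps$-local-optimality (and $x_\eps$ is a local minimizer of \eqref{l0-min} by the argument in Theorem \ref{limit-thm}).

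The third step is bookkeeping on the iteration count. With $\eps' = \eps/2$, Theorem \ref{complexity}(ii) applied to \eqref{pert-l0} gives a bound of $2\lceil L/\nu \rceil \log\frac{\theta}{\eps/2} = 2\lceil L/\nu\rceil \log\frac{2\theta}{\eps}$ iterations, with $\theta$ now involving $F_\nu(x^0) - F^*_\nu$ and the $\nu$-versions of $\omega, c, d, \gamma$ exactly as displayed in the statement. It remains to bound $\lceil L/\nu \rceil$ in terms of the stated quantity $\lceil D^2 L_f/\eps + 1\rceil$. The natural choice here is to take $L = L_\nu = L_f + \nu$ (the smallest admissible Lipschitz constant — though the statement only requires $L > L_\nu$, and one may simply take $L$ to be, say, $L_\nu$ or any fixed multiple; I would use $L = L_\nu$ in the complexity count, or note the dependence). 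Then $L/\nu = (L_f + \nu)/\nu = L_f/\nu + 1 = D^2 L_f/\eps + 1$, so $\lceil L/\nu\rceil = \lceil D^2 L_f/\eps + 1\rceil$, giving precisely the claimed bound $2\lceil D^2 L_f/\eps + 1\rceil \log\frac{2\theta}{\eps}$.

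The main obstacle, and the point needing the most care, is the second step: verifying that the local-value comparison $F^*_\nu \le F_\nu(\bx)$ and the definition $F^* = \min\{F(x) : x \in \cB_{I^*}\}$ mesh correctly, i.e.\ that the minimization defining $F^*_\nu$ and that defining $F^*$ are over the same index set $I^* = I(x^*)$, and that Theorem \ref{complexity} indeed certifies $I(x_\eps) = I(x^*)$ and not merely closeness. This hinges on the fact (established inside the proof of Theorem \ref{complexity}, via the $\alpha > 0$ hypothesis) that once $I_k$ stabilizes it equals $I(x^*)$ and the IHT iterates on $\cB_{I^*}$ reduce to the projected gradient method; one must check the hypothesis $\alpha > 0$ is exactly what is assumed here (it is, by the choice of $L$), and that $\|x_\eps\|_0 \le \|\bx\|_0$ so that the $\lambda\|\cdot\|_0$ terms cancel correctly in the chain of inequalities — but $\bx, x_\eps \in \cB_{I^*}$ with $x_\eps$ having all components off $I^*$ nonzero forces $I(x_\eps) = I^*$, while $\bx$ may have extra zeros, giving $\|\bx\|_0 \le \|x_\eps\|_0$, which is the direction needed. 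Everything else is a direct transcription of Theorem \ref{complexity} with the substitutions $f \mapsto f_\nu$, $L_f \mapsto L_\nu$, $\sigma \mapsto \nu$, $\underline F^* \mapsto \underline F^*_\nu$.
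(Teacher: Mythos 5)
Your proposal is correct and follows essentially the same route as the paper: apply Theorem \ref{complexity}(ii) to the perturbed problem \eqref{pert-l0} with target accuracy $\eps/2$, use the uniform bound $0 \le F_\nu(x)-F(x) \le \nu D^2/2 = \eps/2$ on $\cB$ together with $F^*_\nu \le \min\{F_\nu(x): x\in\cB_{I^*}\}$ to pass from $F_\nu(x_\eps) \le F^*_\nu + \eps/2$ to $F(x_\eps) \le F^* + \eps$, and then identify $\lceil L_\nu/\nu\rceil = \lceil D^2L_f/\eps + 1\rceil$. Your side remarks (that the stated constant implicitly takes $L$ at the threshold $L_\nu$, and the care needed with the $\lambda\|\cdot\|_0$ terms when comparing minimizers over $\cB_{I^*}$) correspond to points the paper also glosses over, so nothing essential is missing relative to the paper's own argument.
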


\begin{proof}
By Theorem \ref{complexity} (ii), we see that the IHT method applied to \eqref{pert-l0} finds an 
$\eps/2$-local-optimal solution $x_\eps \in \cB$ of \eqref{pert-l0}  satisfying $I(x_\eps)=I(x^*)$ and 
$F_\nu(x_\eps) \le F^*_\nu+\eps/2$ within $2\lceil L_\nu/\nu\rceil \log \frac{2\theta}{\eps}$ 
iterations. From the proof of Theorem \ref{limit-thm}, we observe that 
\[
 F_\nu(x^*) = \min \{F_\nu(x): x \in \cB_{I^*}\}.
\]
Hence, we have 
\[
F^*_\nu \ = \ F_\nu(x^*) \  \le \   \min\limits_{x\in\cB_{I^*}} f(x) + \frac{\nu D^2}{2} \ \le \  F^* + \frac{\eps}{2}.
\]
In addition, we observe that $F(x_\eps) \le F_\nu(x_\eps)$. Hence, it follows that 
\[
F(x_\eps) \ \le \ F_\nu(x_\eps) \ \le \ F^*_\nu+\frac{\eps}{2}  \ \le \ F^* + \eps.
\]
Note that $F^*$ is a local optimal value of \eqref{l0-min}. Hence, $x_\eps$ is an $\eps$-local-optimal 
solution of \eqref{l0-min}. The conclusion of this theorem then follows from \eqref{Lnu} and $\nu=\eps/D^2$.
\end{proof}
 
\gap

For the above IHT method, a fixed $L$ is used through all iterations, which may be too conservative.
To improve its practical performance, we can use ``local''  $L$  that is update dynamically. The resulting 
variant of the method is presented as follows. 

\gap

%

\noindent
{\bf A variant of IHT method for \eqref{l0-min}:}  \\ [5pt]
Let $0< L_{\min} < L_{\max}$, $\tau>1$ and $\eta>0$ be given. Choose an 
arbitrary $x^0 \in \cB$ and set $k=0$. 
\begin{itemize}
\item[1)] Choose $L^0_k \in [L_{\min}, L_{\max}]$ arbitrarily. Set $L_k = L^0_k$.  
\bi
\item[1a)] Solve the subproblem 
\beq \label{v-subprob}
x^{k+1} \in \Arg\min\limits_{x\in \cB} \{f(x^k)+\nabla f(x^k)^T(x-x^k)
+\frac{L_k}{2}\|x-x^k\|^2+\lambda \|x\|_0\}.
\eeq
\item[1b)] If 
\beq \label{descent}
F(x^k) - F(x^{k+1}) \ge \frac{\eta}{2} \|x^{k+1}-x^k\|^2
\eeq 
is satisfied,  then go to step 2). 
\item[1c)] Set $L_k \leftarrow \tau L_k$ and go to step 1a).
\ei
\item[2)]
Set $k \leftarrow k+1$ and go to step 1). 
\end{itemize}
\noindent
{\bf end}

\gap

\begin{remark}
$L^0_k$ can be chosen by the similar scheme as used in 
\cite{BaBo88,BiMaRa00}, that is, 
\[
L^0_k  = \max\left\{L_{\min},\min\left\{L_{\max},\frac{\Delta f^T \Delta x}{\|\Delta x\|^2}\right\}\right\}, \\ [6pt]
\]
where $\Delta x = x^k -x^{k-1}$ and $\Delta f= \nabla f(x^k) - \nabla f(x^{k-1})$.
\end{remark}

\gap

At each iteration, the IHT method solves a single subproblem in step 1). Nevertheless, its variant 
needs to solve a sequence of subproblems. We next show that for each outer iteration, its number of  
inner iterations is finite.  

\begin{theorem} \label{inner-iter}
For each $k \ge 0$, the inner termination criterion \eqref{descent} 
is satisfied after at most $\left\lceil \frac{\log(L_f+\eta)-\log(L_{\min})}
{\log \tau} +2\right\rceil$ inner iterations.
\end{theorem}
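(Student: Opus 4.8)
The plan is to show that once the trial Lipschitz parameter $L_k$ in the inner loop exceeds $L_f + \eta$, the descent test \eqref{descent} must hold, and then to count how many inner iterations it takes to reach such an $L_k$ from the starting value $L^0_k \ge L_{\min}$.

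First I would argue the sufficiency step: suppose at some inner iteration the current value $L_k$ satisfies $L_k \ge L_f + \eta$. By the definition of $x^{k+1}$ in \eqref{v-subprob}, exactly as in the derivation of \eqref{diff-seq} in the proof of Theorem \ref{limit-thm}, we have
\[
F(x^k) - F(x^{k+1}) \ \ge \ \frac{L_k - L_f}{2}\|x^{k+1}-x^k\|^2,
\]
using the Lipschitz inequality for $f$ and the fact that $x^{k+1}$ minimizes the model with quadratic coefficient $L_k/2$ plus $\lambda\|x\|_0$ over $\cB$ (the $\|x\|_0$ terms cancel appropriately as in that earlier argument). Since $L_k - L_f \ge \eta$, this yields exactly \eqref{descent}, so the inner loop terminates at this $L_k$ (or earlier).

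Next I would count inner iterations. The parameter is initialized at $L_k = L^0_k \ge L_{\min}$ and multiplied by $\tau$ in step 1c) after each failed test. After $j$ executions of step 1c) we have $L_k = \tau^j L^0_k \ge \tau^j L_{\min}$. By the sufficiency step, termination is guaranteed once $\tau^j L_{\min} \ge L_f + \eta$, i.e. once $j \ge \bigl(\log(L_f+\eta) - \log L_{\min}\bigr)/\log\tau$ (using $\tau>1$ so $\log\tau>0$). Hence it suffices to take $j = \bigl\lceil \bigl(\log(L_f+\eta)-\log(L_{\min})\bigr)/\log\tau \bigr\rceil$ doublings. The number of solves of the subproblem \eqref{v-subprob} (i.e. passes through step 1a)) is one more than the number of doublings, giving at most $\bigl\lceil (\log(L_f+\eta)-\log L_{\min})/\log\tau \bigr\rceil + 1$; allowing a further slack of one to absorb the possibility that $L^0_k$ could be as small as $L_{\min}$ and that the ceiling may round up gives the stated bound $\bigl\lceil \frac{\log(L_f+\eta)-\log(L_{\min})}{\log\tau} + 2\bigr\rceil$.

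There is no real obstacle here; the only point requiring a little care is reproducing the descent inequality $F(x^k)-F(x^{k+1}) \ge \frac{L_k-L_f}{2}\|x^{k+1}-x^k\|^2$ correctly with the variable parameter $L_k$ in place of the fixed $L$, making sure the cardinality terms are handled exactly as in \eqref{diff-seq}. Once that is in place the rest is the elementary geometric-growth count above, and one simply has to confirm that the "$+2$" comfortably covers the off-by-one from counting subproblem solves versus doublings and from the ceiling.
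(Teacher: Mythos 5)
Your proposal is correct and follows essentially the same route as the paper: first establish that $F(x^k)-F(x^{k+1}) \ge \frac{L_k-L_f}{2}\|x^{k+1}-x^k\|^2$ exactly as in the derivation of \eqref{diff-seq}, so that \eqref{descent} must hold once $L_k \ge L_f+\eta$, and then count the geometric growth of $L_k$ from $L^0_k \ge L_{\min}$ under multiplication by $\tau$. The paper phrases the count by bounding the final value $\bar L_k < \tau(L_f+\eta)$ from the failure of the penultimate test, while you count the doublings needed to cross the threshold directly; the two are equivalent, and your bound $\left\lceil \frac{\log(L_f+\eta)-\log L_{\min}}{\log\tau}\right\rceil+1$ is in fact at least as tight as the stated one.
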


\begin{proof}
Let $\bar L_k$ denote the final value of $L_k$ at the $k$th outer iteration. By \eqref{v-subprob} 
and the similar arguments as for deriving \eqref{diff-seq}, one can show that 
\[
F(x^k) - F(x^{k+1}) \ \ge \  \frac{L_k-L_f}{2} \|x^{k+1}-x^k\|^2.
\]
Hence, \eqref{descent} holds whenever $L_k \ge L_f+\eta$, which together with 
the definition of $\bar L_k$ implies that $\bar L_k/\tau < L_f+\eta$, that is, $\bar L_k <\tau(L_f+\eta)$. 
Let $n_k$ denote the number of inner iterations for the $k$th outer iteration. Then, we have 
\[
L_{\min} \tau^{n_k-1} \le L^0_k \tau^{n_k-1} =  \bar L_k <  \tau(L_f+\eta).
\] 
Hence, $n_k \le \left\lceil \frac{\log(L_f+\eta)-\log(L_{\min})}{\log \tau} +2\right\rceil$ 
and the conclusion holds.
\end{proof}

%

\gap

We next establish that the sequence $\{x^k\}$ generated by the above variant of IHT method 
converges to a local minimizer of $\eqref{l0-min}$ and moreover, $F(x^k)$ converges to a local 
minimum value of \eqref{l0-min}. 

\begin{theorem} \label{outer-iter}
Let $\{x^k\}$ be generated by the above variant of IHT method. Then, $x^k$ converges to a local 
minimizer $x^*$ of problem \eqref{l0-min}, and moreover, $I(x^k) \to I(x^*)$, 
$\|x^k\|_0 \to \|x^*\|_0$ and $F(x^k) \to F(x^*)$.
\end{theorem}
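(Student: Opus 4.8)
The plan is to mirror the structure of the proof of Theorem~\ref{limit-thm}, replacing the fixed constant $L$ by the dynamically chosen $L_k$ and the modulus $L-L_f$ in the descent inequality by the fixed modulus $\eta$ supplied by the inner termination criterion \eqref{descent}. First I would record that \eqref{descent} holds at every outer iteration (this is exactly the stopping rule, whose reachability is guaranteed by Theorem~\ref{inner-iter}), and that $L_k \in [L_{\min},\tau(L_f+\eta))$ for all $k$ by the argument inside the proof of Theorem~\ref{inner-iter}. Summing \eqref{descent} and using that $f$ (hence $F$) is bounded below on $\cB$, I would conclude $\sum_k \|x^{k+1}-x^k\|^2 < \infty$, so $\|x^{k+1}-x^k\| \to 0$; this is the analogue of \eqref{diff-lim}.

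Next I would re-derive the lower-bound property \eqref{lower-bdd} for this variant. The closed-form solution \eqref{IHT} still applies with $L$ replaced by $L_k$, so any nonzero component $x^{k+1}_j$ satisfies $|x^{k+1}_j| \ge \min(\text{box bound},\sqrt{2\lambda/L_k})$; since $L_k \le \tau(L_f+\eta) =: \bar L$ uniformly, we get a uniform bound $|x^{k+1}_j| \ge \tilde\delta := \min_{i\notin I_0}\tilde\delta_i > 0$ with $\tilde\delta_i$ defined as in \eqref{deltai} but using $\bar L$ in place of $L$. Consequently $\|x^{k+1}-x^k\| \ge \tilde\delta$ whenever $I_k := I(x^k) \neq I_{k+1}$, which combined with $\|x^{k+1}-x^k\|\to 0$ forces $I_k$ to stabilize: there exist $K$ and $I$ with $I_k = I$ for all $k \ge K$.

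Once the support is fixed, \eqref{v-subprob} reduces, for $k > K$, to
\[
x^{k+1} = \arg\min_{x\in\cB_I}\Big\{f(x^k)+\nabla f(x^k)^T(x-x^k)+\tfrac{L_k}{2}\|x-x^k\|^2\Big\},
\]
i.e.\ a projected gradient step on $\min\{f(x):x\in\cB_I\}$ with a \emph{variable} stepsize. Here I cannot quote Theorem~\ref{sd-lemma} verbatim because it assumes a constant $L$; instead I would argue convergence directly. The tail sequence still satisfies a sufficient-decrease inequality (\eqref{descent} with modulus $\eta$) and the first-order optimality of each subproblem, so $\{f(x^k)\}_{k>K}$ converges and $\|x^{k+1}-x^k\|\to 0$; boundedness of $\{x^k\}$ (from $\cB$ being closed and the iterates lying in $\cB$, or from the sublevel set argument) gives an accumulation point $\hat x \in \cB_I$ which, by passing to the limit in the first-order condition $\nabla f(x^k) \in -\tfrac1{L_k}\cdot(\text{something in }\cN_{\cB_I})$ — precisely, $0 \in \nabla f(x^k) + \cN_{\cB_I}(x^{k+1}) + L_k(x^{k+1}-x^k)$ and $L_k$ stays bounded — satisfies $0 \in \nabla f(\hat x) + \cN_{\cB_I}(\hat x)$, so $\hat x$ minimizes the convex function $f$ over $\cB_I$. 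Since this characterizes $\hat x$ uniquely up to the (convex) optimal set and $\{\|x^k - \hat x\|\}$ can be shown non-increasing along the tail via the strong-convexity-of-the-subproblem inequality \eqref{1st-opt} (adapted with $L_k$), the whole sequence converges to $\hat x =: x^*$. Finally, as in Theorem~\ref{limit-thm}, $|x^*_i|\ge\tilde\delta$ for $i\notin I$ and $x^*_i=0$ for $i\in I$ give $I(x^k)=I(x^*)=I$ eventually, hence $\|x^k\|_0\to\|x^*\|_0$; continuity of $f$ gives $f(x^k)\to f(x^*)$ and therefore $F(x^k)\to F(x^*)$; and $x^* \in \Arg\min\{f(x):x\in\cB_I\}$ makes $x^*$ a local minimizer of \eqref{l0-min} by the same reasoning used after \eqref{loc-min}.

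The main obstacle is the step where the support has stabilized but the stepsize $1/L_k$ keeps varying: one cannot invoke Theorem~\ref{sd-lemma}(ii) directly, so the convergence of the tail projected-gradient sequence with bounded-but-nonconstant stepsizes must be established by hand. The clean way is to combine (a) the monotone decrease of $f$ along the tail with the sufficient-decrease bound, giving $\|x^{k+1}-x^k\|\to 0$; (b) the adapted inequality \eqref{1st-opt} with $L$ replaced by $L_k$, which still yields $\|x^{k+1}-x^*\|\le\|x^k-x^*\|$ for any minimizer $x^*$ of $f$ over $\cB_I$ (note $\frac{L_k}{2}\|x-x^k\|^2 \ge \frac{L_k}{2}\|x-x^{k+1}\|^2$ requires care, but follows from the same strong-convexity-of-the-subproblem argument since both balls use the \emph{same} $L_k$); and (c) the standard subsequence-plus-Fej\'er-monotonicity argument. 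Everything else is a routine transcription of the fixed-$L$ proof with $L \rightsquigarrow L_k$ and $L - L_f \rightsquigarrow \eta$.
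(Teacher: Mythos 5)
Your proposal follows the same route as the paper's own proof, which is in fact terser than yours: the paper bounds the accepted constants $\bar L_k$ in $[L_{\min},\tau(L_f+\eta))$ via Theorem \ref{inner-iter}, re-derives \eqref{lower-bdd} with $L$ replaced by $\tau(L_f+\eta)$ to obtain a uniform threshold $\bar\delta>0$, concludes $\|x^{k+1}-x^k\|\ge\bar\delta$ whenever the support changes, and then simply appeals to ``the similar arguments as used in the proof of Theorem \ref{limit-thm}.'' Your first two paragraphs are exactly this intended argument.

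You are also right that the non-literal part is the tail phase: once the support is frozen the iteration is a projected-gradient method with a varying stepsize, so Theorem \ref{sd-lemma} cannot be quoted verbatim (a point the paper glosses over). However, your repair is not quite correct as stated. You claim \eqref{1st-opt} survives with $L$ replaced by $L_k$ and hence gives exact Fej\'er monotonicity $\|x^{k+1}-x^*\|\le\|x^k-x^*\|$. The derivation of \eqref{1st-opt} uses the descent inequality $f(x^{k+1})\le f(x^k)+\nabla f(x^k)^T(x^{k+1}-x^k)+\frac{L}{2}\|x^{k+1}-x^k\|^2$, which requires $L\ge L_f$; but the variant only certifies the sufficient-decrease test \eqref{descent}, and the accepted $\bar L_k$ may well be smaller than $L_f$ (for instance if $L_{\min}<L_f$ and \eqref{descent} already holds at the first trial value). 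The fix is routine: invoking the descent lemma with the true constant $L_f$ instead yields
\[
\|x^{k+1}-x^*\|^2 \ \le\ \|x^k-x^*\|^2+\frac{\max\{L_f-\bar L_k,\,0\}}{\bar L_k}\,\|x^{k+1}-x^k\|^2,
\]
and since $\bar L_k\ge L_{\min}$ and $\sum_k\|x^{k+1}-x^k\|^2<\infty$ (from \eqref{descent} and the boundedness below of $F$), the error term is summable; quasi-Fej\'er monotonicity combined with your subsequence argument then gives convergence of the whole tail. With that one adjustment your proof is complete and coincides with the paper's.
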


\begin{proof}
Let $\bar L_k$ denote the final value of $L_k$ at the $k$th outer iteration. From the proof of Theorem 
\ref{inner-iter}, we know that $\bar L_k \in [L_{\min}, \tau(L_f+\eta))$. Using this fact and a similar 
argument as used to prove \eqref{lower-bdd}, one can obtain that 
\[
|x^{k+1}_i|  \ \ge \ \bar\delta := \min\limits_{i \notin I_0} \bar\delta_i \ > \ 0, \ \ \ \mbox{if} \  \ x^{k+1}_j \neq 0,
\]
where $I_0 = \{i: l_i=u_i=0\}$ and $\bar\delta_i$ is defined according to \eqref{deltai} by replacing 
$L$ by $\tau(L_f+\eta)$ for all $i \in I_0$. It implies that 
\[
\|x^{k+1}-x^k\| \ge \bar \delta \ \ \mbox{if} \ \ I(x^k) \neq I(x^{k+1}).
\]
The conclusion then follows from this inequality and the similar arguments as used in 
the proof of Theorem \ref{limit-thm}.
\end{proof}

\section{$l_0$-regularized convex cone programming}
\label{l0-cp}


In this section we consider $l_0$-regularized convex cone programming problem \eqref{l0-cone} and 
propose IHT methods for solving it. In particular, we apply the IHT method proposed in Section 
\ref{l0-box} to a quadratic penalty relaxation of \eqref{l0-cone} and establish the iteration 
complexity for finding an $\eps$-approximate local minimizer of \eqref{l0-cone}. We also 
propose a variant of the method in which the associated penalty parameter is dynamically 
updated, and show that every accumulation point is a local minimizer of \eqref{l0-cone}.

Let $\cB$ be defined in \eqref{cB}. We assume that $f$ is a smooth convex function in $\cB$, 
$\nabla f$ is Lipschitz continuous with constant $L_f$ and that $f$ is bounded below on $\cB$. In 
addition,  we make the following assumption throughout this section.

\begin{assumption} \label{assump-cone}
For each $I \subseteq \{1,\ldots,n\}$, there exists a Lagrange multiplier for 
\beq \label{convex-subprob}
 f^*_I = \min\{f(x): A x - b \in \cK^*, x \in \cB_I\},  
\eeq
provided that \eqref{convex-subprob} is feasible, that is, there exists $\mu^* \in -\cK$ 
such that $f^*_I = d_I(\mu^*)$, 
where 
\[
d_I(\mu) := \inf\{f(x)+ \mu^T(Ax-b):  x\in \cB_I\}, \ \forall \mu \in -\cK. 
\]
\end{assumption}

Let $x^*$ be a point in $\cB$, and let $I^*=\{i: x^*_i=0\}$. One can observe that $x^*$ is a 
local minimizer of \eqref{l0-cone} if and only if $x^*$ is a minimizer of \eqref{convex-subprob} 
with $I = I^*$. Then, in view of Assumption \ref{assump-cone}, we see that $x^*$ is a local 
minimizer of \eqref{l0-cone} if and only if $x^* \in \cB$ and there exists $\mu^* \in -\cK$ such 
that 
\beq \label{KKT-cond}
\ba{l}
Ax^* - b \in \cK^*, \ \ \ (\mu^*)^T(Ax^*-b)=0, \\ [5pt]
\nabla f(x^*) + A^T \mu^* \in -\cN_{\cB_{I^*}}(x^*).
\ea
\eeq

Based on the above observation, we can define an approximate local minimizer of \eqref{l0-cone} to 
be the one that nearly satisfies \eqref{KKT-cond}.

\begin{definition} 
Let $x^*$ be a point in $\cB$, and let $I^*=\{i: x^*_i=0\}$. 
$x^*$ is an $\epsilon$-approximate local minimizer of \eqref{l0-cone} if there exists 
$\mu^* \in -\cK$ such that 
\[
\ba{l}
d_{\cK^*}(A x^* - b) \ \le \ \epsilon, \ \ \ (\mu^*)^T \Pi_{\cK^*}(Ax^*-b) = 0, \\ [6pt]
\nabla f(x^*) + A^T \mu^* \in -\cN_{\cB_{I^*}}(x^*) + \cU(\epsilon).
\ea
\]  
\end{definition}

\gap

In what follows,  we propose an IHT method for finding an approximate local minimizer of 
\eqref{l0-cone}. In particular, we apply the IHT method or its variant to a quadratic 
penalty relaxation of \eqref{l0-cone} which is in the form of 
\beq \label{l0-penalty}
{\underline\Psi}^*_\rho := \min\limits_{x \in \cB} \{\Psi_\rho(x) := \Phi_\rho(x) + \lambda \|x\|_0\},
\eeq
where 
\beq \label{Phi-rho}
\Phi_\rho(x) := f(x) + \frac{\rho}{2} [d_{\cK^*}(Ax-b)]^2
\eeq

It is not hard to show that the function $\Phi_\rho$ is convex differentiable and moreover 
$\nabla \Phi_{\rho}$ is Lipschitz continuous with constant 
\beq \label{L-rho}
L_\rho = L_f + \rho\|A\|^2
\eeq
(see, for example, Proposition 8 and Corollary 9 of \cite{LaMo12}). Therefore, problem 
\eqref{l0-penalty}  can be suitably solved by the IHT method or its variant proposed in Section 
\ref{l0-box}. 

Under the assumption that $f$ is strongly convex in $\cB$,  we next establish the iteration complexity 
of the IHT method applied to  \eqref{l0-penalty} for finding an approximate local minimizer of 
\eqref{l0-cone}. Given any $L>0$, let $s_L$, $\alpha$ and $\beta$ be defined according to 
\eqref{sL}, \eqref{alpha} and \eqref{beta}, respectively, by replacing $f$ by $\Phi_\rho$, and let 
$\delta$ be defined in \eqref{lower-bdd}.

\begin{theorem}
Assume that $f$ is a smooth strongly convex function with modulus $\sigma>0$. Given any 
$\eps >0$, let
\beq \label{rho}
\rho = \frac{t}{\epsilon}+\frac{1}{\sqrt{8}\|A\|}
\eeq
for any $t \ge \max\limits_{I \subseteq \{1,\ldots,n\}}\min\limits_{\mu \in \Lambda_I}\|\mu\|$, where  
$\Lambda_I$ is the set of Lagrange multipliers of \eqref{convex-subprob}.  Let $L > L_\rho$ be chosen 
such that $\alpha >0$. Let $\{x^k\}$ be generated by the IHT method applied to \eqref{l0-penalty}, 
and let $x^* =\lim_{k \to \infty} x^k$ and $\Psi^*_\rho = \Psi_\rho(x^*)$.  Then the IHT method finds 
an $\epsilon$-approximate local minimizer of \eqref{l0-cone} in at most 
\[
N :=2\left\lceil \frac{L_\rho}{\sigma} \right\rceil \log \frac{8L_\rho\theta}{\eps^2}
\]
iterations, where 
\beqas
&\theta = (\Psi_\rho(x^0) - \Psi_\rho^*)2^{\frac{\omega+3}{2}},  \quad 
\omega = \max\limits_t \left\{(d- 2 c) t -ct^2: 
0 \le t \le  \left\lfloor \frac{2(\Psi_\rho(x^0)-\Psi_\rho^*)}{(L-L_\rho)\delta^2} 
\right\rfloor\right\}, \\ [6pt]
& c=\frac{(L-L_\rho)\delta^2}{2(\Psi_\rho(x^0)-{\underline\Psi}^*_\rho)}, \quad\quad  
\gamma = \sigma(\sqrt{2\alpha+\beta^2}-\beta)^2/32, \\ [6pt]
&d = 2 \log(\Psi_\rho(x^0) - {\underline\Psi}^*_\rho) +4- 2 \log \gamma+ c.  
\eeqas
\end{theorem}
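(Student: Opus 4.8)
The plan is to combine the iteration-complexity result for the IHT method on $l_0$ regularized box constrained \emph{strongly convex} programming (Theorem \ref{complexity}) with the approximate-KKT estimates for the quadratic penalty relaxation (Lemma \ref{approx-soln}), applied not to the full problem but to the reduced problem on $\cB_{I^*}$ where $I^* = I(x^*)$. First I would observe that $\Phi_\rho$ is strongly convex with the same modulus $\sigma$ as $f$ (adding the nonnegative term $\frac{\rho}{2}[d_{\cK^*}(Ax-b)]^2$ preserves strong convexity), and $\nabla\Phi_\rho$ is Lipschitz with constant $L_\rho = L_f + \rho\|A\|^2$ by \eqref{L-rho}. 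Hence Theorem \ref{complexity} applies verbatim to the IHT method run on \eqref{l0-penalty}: after at most $2\lceil L_\rho/\sigma\rceil \log(\theta/\xi)$ iterations it produces $x_\xi \in \cB$ with $I(x_\xi) = I(x^*) = I^*$ and $\Psi_\rho(x_\xi) \le \Psi_\rho^* + \xi$, for any tolerance $\xi > 0$. Since on the fixed support $I^*$ the $\lambda\|x\|_0$ term is constant, $\Psi_\rho(x_\xi) - \Psi_\rho^* \le \xi$ is exactly the statement that $x_\xi$ is a $\xi$-approximate solution of $\min\{\Phi_\rho(x): x \in \cB_{I^*}\}$.

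Next I would invoke Lemma \ref{approx-soln} with $X = \cB_{I^*}$, the function $\Phi_\rho$ playing the role of its $\Phi_\rho$, and $\xi$ the approximation accuracy, noting that Assumption \ref{assump-cone} guarantees a Lagrange multiplier $\mu^*$ for \eqref{convex-subprob} with $I = I^*$ exists and that the bound $\|\mu^*\| \le t$ holds by the choice of $t$ in \eqref{rho}. The lemma gives a pair $(x_\xi^+, \mu)$ — with $x_\xi^+ := \Pi_{\cB_{I^*}}(x_\xi - \nabla\Phi_\rho(x_\xi)/L_\rho)$ — satisfying $\mu \in -\cK$, $\mu^T\Pi_{\cK^*}(Ax_\xi^+ - b) = 0$, together with
\[
d_{\cK^*}(Ax_\xi^+ - b) \le \tfrac{1}{\rho}\|\mu^*\| + \sqrt{\xi/\rho}, \qquad
\nabla f(x_\xi^+) + A^T\mu \in -\cN_{\cB_{I^*}}(x_\xi^+) + \cU(2\sqrt{2L_\rho\xi}).
\]
Here I use that $\nabla\Phi_\rho(x_\xi^+) = \nabla f(x_\xi^+) + \rho A^T(Ax_\xi^+ - b - \Pi_{\cK^*}(Ax_\xi^+ - b))$ and $\mu$ is precisely the last term, so the second inclusion is genuinely a statement about $\nabla f$. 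To reach the $\epsilon$-approximate-local-minimizer definition I need both the feasibility residual and the gradient residual to be at most $\epsilon$. Using $\|\mu^*\| \le t$ and $\rho = t/\epsilon + 1/(\sqrt{8}\|A\|) \ge t/\epsilon$, the first term of the feasibility bound is at most $\epsilon/1 \cdot (t/(\rho\epsilon))\le \epsilon/2$ once one also checks $\rho \ge 2t/\epsilon$ is \emph{not} what is assumed — so more carefully, I would split $\epsilon$ between the two terms: the $\frac{1}{\rho}\|\mu^*\|$ piece and the $\sqrt{\xi/\rho}$ piece, and choose $\xi$ accordingly.

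The calibration of $\xi$ is the main bookkeeping step. I would set $\xi := \epsilon^2/(8L_\rho)$, which makes the gradient residual $2\sqrt{2L_\rho\xi} = 2\sqrt{2L_\rho\cdot \epsilon^2/(8L_\rho)} = 2\sqrt{\epsilon^2/4} = \epsilon$, exactly matching the $\cU(\epsilon)$ requirement. For the feasibility bound, $\sqrt{\xi/\rho} = \sqrt{\epsilon^2/(8L_\rho\rho)}$; since $\rho \ge 1/(\sqrt 8\|A\|)$ and $L_\rho = L_f + \rho\|A\|^2 \ge \rho\|A\|^2$, one gets $8L_\rho\rho \ge 8\rho^2\|A\|^2 \ge 8\cdot\frac{1}{8\|A\|^2}\cdot\|A\|^2 = 1$, hence $\sqrt{\xi/\rho} \le \epsilon/\sqrt{8L_\rho\rho}\cdot$... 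I would instead directly bound $\sqrt{\xi/\rho} = \frac{\epsilon}{\sqrt{8L_\rho\rho}}$ and, from $\rho\|A\|^2 \le L_\rho$ and $\rho \ge \frac{1}{\sqrt 8 \|A\|}$, deduce $8 L_\rho \rho \ge 8 (\rho\|A\|^2)\rho = 8\rho^2\|A\|^2 \ge 1$, giving $\sqrt{\xi/\rho}\le \epsilon$; combined with $\frac{1}{\rho}\|\mu^*\| \le \frac{t}{\rho} \le \epsilon$ (as $\rho \ge t/\epsilon$) this yields $d_{\cK^*}(Ax_\xi^+ - b) \le 2\epsilon$ — and the remaining factor of $2$ must be absorbed by being slightly more generous in the choice of $t$ or by a sharper split; I expect the intended argument chooses the constants so the sum is exactly $\le\epsilon$, and this constant-chasing is where I would need to be careful. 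Finally, substituting $\xi = \epsilon^2/(8L_\rho)$ into the iteration count $2\lceil L_\rho/\sigma\rceil\log(\theta/\xi)$ gives precisely $N = 2\lceil L_\rho/\sigma\rceil\log(8L_\rho\theta/\epsilon^2)$, and one concludes that $x_\xi^+$ (returned by the method) is an $\epsilon$-approximate local minimizer of \eqref{l0-cone}, since $I(x_\xi^+) = I^*$ as well because $x_\xi^+$ lies in $\cB_{I^*}$ and its nonzero components inherit the lower bound $\delta$ from \eqref{lower-bdd}. The main obstacle is not conceptual but the careful propagation of the penalty parameter and accuracy choices through the two cited lemmas so that all three defining inequalities land with the stated constants.
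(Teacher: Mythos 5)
Your overall route is exactly the paper's: run the IHT method on the penalty problem \eqref{l0-penalty}, invoke Theorem \ref{complexity} with tolerance $\xi=\epsilon^2/(8L_\rho)$ to get $\tx$ with $I(\tx)=I^*$ and $\Phi_\rho(\tx)-\Phi_\rho(x^*)\le\xi$, recognize $x^*=\arg\min_{x\in\cB_{I^*}}\Phi_\rho(x)$, and then feed $\tx$ into Lemma \ref{approx-soln} with $X=\cB_{I^*}$ to produce the pair $(\tx^+,\mu)$. The gradient-residual calibration $2\sqrt{2L_\rho\xi}=\epsilon$ and the resulting iteration count $2\lceil L_\rho/\sigma\rceil\log(8L_\rho\theta/\epsilon^2)$ are exactly right.

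The one place you do not close the argument is the feasibility residual, where you end up with $d_{\cK^*}(A\tx^+-b)\le 2\epsilon$ and leave the ``constant-chasing'' open. The fix is to not discard the factor $1/\rho$: you already derived $8L_\rho\rho\ge 8\rho^2\|A\|^2$, and keeping this in the denominator gives
\[
\sqrt{\xi/\rho} \;=\; \frac{\epsilon}{\sqrt{8L_\rho\rho}} \;\le\; \frac{\epsilon}{\sqrt{8}\,\rho\|A\|}
\;=\;\frac{1}{\rho}\cdot\frac{\epsilon}{\sqrt{8}\|A\|},
\]
rather than the cruder bound $\sqrt{\xi/\rho}\le\epsilon$ obtained from $8L_\rho\rho\ge 1$. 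Then
\[
d_{\cK^*}(A\tx^+-b)\;\le\;\frac{1}{\rho}\|\mu^*\|+\sqrt{\xi/\rho}
\;\le\;\frac{1}{\rho}\left(t+\frac{\epsilon}{\sqrt{8}\|A\|}\right)\;=\;\epsilon,
\]
where the last equality is forced by the specific form of \eqref{rho}, since $\rho\epsilon=t+\epsilon/(\sqrt{8}\|A\|)$. In other words, the seemingly odd additive term $1/(\sqrt{8}\|A\|)$ in $\rho$ is precisely what absorbs the $\sqrt{\xi/\rho}$ contribution, and no extra slack in $t$ or a $\xi$--split is needed. With that line inserted, your proof coincides with the paper's.
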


\begin{proof}
We know from Theorem \ref{limit-thm} that $x^k \to x^*$ for some local minimizer $x^*$ 
of \eqref{l0-penalty}, $I(x^k) \to I(x^*)$ and $\Psi_\rho(x^k) \to \Psi_\rho(x^*) = \Psi^*_\rho$. 
By Theorem \ref{complexity}, after at most $N$ iterations, the IHT method generates $\tx \in \cB$ such at $I(\tx)=I(x^*)$ and 
$\Psi_\rho(\tx)-\Psi_\rho(x^*) \le \xi := \epsilon^2/(8L_\rho)$. It then follows that $\Phi_\rho(\tx)-\Phi_\rho(x^*) \le \xi$.
 Since $x^*$ is a local minimizer of \eqref{l0-penalty}, we observe that 
\beq \label{penalty-subprob}
x^* = \arg\min_{x\in \cB_{I^*}} \Phi_\rho(x),
\eeq
where $I^*=I(x^*)$. Hence, $\tx$ is a $\xi$-approximate solution of \eqref{penalty-subprob}. 
Let $\mu^* \in \Arg\min\{\|\mu\|: \mu \in \Lambda_{I^*}\}$, where  $\Lambda_{I^*}$ is the set of 
Lagrange multipliers of \eqref{convex-subprob} with $I=I^*$. In view of Lemma \ref{approx-soln}, we 
see that the pair $(\tx^+,\mu)$ defined as 
$\tx^+ := \Pi_{\cB_{I^*}}(\tx-\nabla \Phi_\rho(\tx)/{L_\rho})$ and $\mu := \rho[A\tx^+ - b - \Pi_{\cK^*}(A\tx^+ - b)]$  
satisfies 
\[
\ba{l}
\nabla f(\tx^+) + A^T \mu \ \in \ -\cN_{\cB_{I^*}}(\tx^+) + \cU(2\sqrt{2L_{\rho}\xi}) \ = \-\cN_{\cB_I}(\tx^+) + \cU(\epsilon), \\ [6pt]
d_{\cK^*}(A\tx^+-b) \ \le \ \frac{1}{\rho}\|\mu^*\| + \sqrt{\frac{\xi}{\rho}} \ \le \ 
\frac{1}{\rho}\left(\|\mu^*\|+\frac{\epsilon}{\sqrt8 \|A\|}\right) 
\ \le \ \epsilon,
\ea
\]
where the last inequality is due to \eqref{rho} and the assumption $t \ge \hat t \ge \|\mu^*\|$. 
Hence, $\tx^+$ is an $\epsilon$-approximate local minimizer of \eqref{l0-cone}. 
\end{proof}

\gap

We next consider finding an $\epsilon$-approximate local minimizer of \eqref{l0-cone} for the 
case where $\cB$ is bounded and $f$ is convex but not strongly convex. In particular, we  apply 
the IHT method or its variant to a quadratic penalty relaxation of a perturbation of \eqref{l0-cone} 
obtained from adding a ``small' ' strongly convex regularization term to $f$.

Consider a perturbation of \eqref{l0-cone} in the form of
\beq \label{pert-l0-cone}
\min\limits_{x\in\cB}\{f(x) + \frac{\nu}{2}\|x\|^2 +\lambda \|x\|_0: \ A x - b \in \cK^*\}.
\eeq

The associated quadratic penalty problem for \eqref{pert-l0-cone} is given by
\beq \label{qp-pert}
{\underline\Psi}^*_{\rho,\nu} := \min\limits_{x\in\cB} \{\Psi_{\rho,\nu}(x) := \Phi_{\rho,\nu}(x) + \lambda \|x\|_0\},
\eeq 
where 
\[
\Phi_{\rho,\nu}(x) := f(x) + \frac{\nu}{2}\|x\|^2 + \frac{\rho}{2} [d_{\cK^*}(Ax-b)]^2.
\]
One can easily see that $\Phi_{\rho,\nu}$ is strongly convex in $\cB$ with modulus $\nu$ and moreover 
$\nabla \Phi_{\rho,\nu}$ is Lipschitz continuous with constant 
\[
L_{\rho,\nu} := L_f + \rho \|A\|^2 + \nu. 
\]
Clearly, the IHT method or its variant can be suitably applied to \eqref{qp-pert}. We next establish 
the iteration complexity of the IHT method applied to \eqref{qp-pert} for finding an approximate local 
minimizer of \eqref{l0-cone}. Given any $L>0$, let $s_L$, $\alpha$ and $\beta$ be defined according to 
\eqref{sL}, \eqref{alpha} and \eqref{beta}, respectively, by replacing $f$ by $\Phi_{\rho,\nu}$, and let 
$\delta$ be defined in \eqref{lower-bdd}.

\begin{theorem} 
Suppose that $\cB$ is bounded and $f$ is convex but not strongly convex. Let $\eps >0$ be 
arbitrarily given, $D = \max\{\|x\|: x \in \cB\}$, 
\beq \label{rho-nu}
\rho = \frac{\left(\sqrt{D}+\sqrt{D+16t + \frac{2\sqrt{2}\epsilon}
{\|A\|}}\right)^2}{16\epsilon}, \ \ \ \nu = \frac{\epsilon}{2D}
\eeq
for any $t \ge \max\limits_{I \subseteq \{1,\ldots,n\}}\min\limits_{\mu \in \Lambda_I}\|\mu\|$, where  
$\Lambda_I$ is the set of Lagrange multipliers of \eqref{convex-subprob}. Let $L > L_{\rho,\nu}$ be 
chosen such that $\alpha >0$. Let $\{x^k\}$ be generated by the IHT method applied to 
\eqref{qp-pert}, and let $x^* =\lim_{k \to \infty} x^k$ and $\Psi^*_{\rho,\nu} = \Psi_{\rho,\nu}(x^*)$.  
Then the IHT method finds an $\epsilon$-approximate local minimizer of \eqref{l0-cone} in at most 
\[
N:= 2\left\lceil \frac{2DL_{\rho,\nu}}{\eps} \right\rceil \log \frac{32L_{\rho,\nu}\theta}{\eps^2}
\]
iterations, where 
\beqas
&\theta = (\Psi_{\rho,\nu}(x^0) - \Psi_{\rho,\nu}^*)2^{\frac{\omega+3}{2}},  \quad 
\omega = \max\limits_t \left\{(d- 2 c) t -ct^2: 
0 \le t \le  \left\lfloor \frac{2(\Psi_{\rho,\nu}(x^0)-\Psi_{\rho,\nu}^*)}{(L-L_{\rho,\nu})\delta^2} \right\rfloor\right\}, \\ [6pt]
& c=\frac{(L-L_{\rho,\nu})\delta^2}{2(\Psi_{\rho,\nu}(x^0)-{\underline\Psi}^*_{\rho,\nu})}, \quad\quad 
\gamma = \nu(\sqrt{2\alpha_{\rho,\nu}+\beta^2_{\rho,\nu}}-\beta_{\rho,\nu})^2/32, \\ [6pt] 
& d = 2 \log(\Psi_{\rho,\nu}(x^0) - {\underline\Psi}^*_{\rho,\nu}) +4- 2 \log \gamma+ c.  
\eeqas
\end{theorem}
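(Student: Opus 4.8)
The plan is to follow the preceding (strongly convex) cone theorem verbatim, but with \eqref{qp-pert} playing the role that \eqref{l0-penalty} played there: even though $f$ is only convex, the penalized perturbed objective $\Phi_{\rho,\nu}$ is strongly convex with modulus $\nu$, so Theorem \ref{complexity} applies to it with $\sigma=\nu$ and $L=L_{\rho,\nu}$. Set $\nu:=\epsilon/(2D)$ and $\xi:=\epsilon^2/(32L_{\rho,\nu})$. By Theorem \ref{limit-thm}, the IHT iterates $\{x^k\}$ on \eqref{qp-pert} converge to a local minimizer $x^*$ with $I(x^k)\to I(x^*)=:I^*$ and $\Psi_{\rho,\nu}(x^k)\to\Psi^*_{\rho,\nu}$, and (as in the proof of Theorem \ref{limit-thm}) $x^*=\arg\min\{\Phi_{\rho,\nu}(x):x\in\cB_{I^*}\}$. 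Since $\lceil L_{\rho,\nu}/\nu\rceil=\lceil 2DL_{\rho,\nu}/\epsilon\rceil$ and $\theta/\xi=32L_{\rho,\nu}\theta/\epsilon^2$, Theorem \ref{complexity}(ii) applied with target accuracy $\xi$ gives exactly the stated count $N$: within $N$ iterations the method produces $\tx\in\cB$ with $I(\tx)=I^*$ and $\Psi_{\rho,\nu}(\tx)-\Psi^*_{\rho,\nu}\le\xi$; because $\|\tx\|_0=\|x^*\|_0$ this forces $\Phi_{\rho,\nu}(\tx)-\Phi_{\rho,\nu}(x^*)\le\xi$, i.e.\ $\tx$ is a $\xi$-approximate solution of $\min\{\Phi_{\rho,\nu}(x):x\in\cB_{I^*}\}$.

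Next I would apply Lemma \ref{approx-soln} to the restricted perturbed cone program $\min\{f(x)+\tfrac{\nu}{2}\|x\|^2:\ Ax-b\in\cK^*,\ x\in\cB_{I^*}\}$, whose quadratic penalty is precisely $\Phi_{\rho,\nu}$ with Lipschitz constant $L_{\rho,\nu}$ (a Lagrange multiplier exists by Assumption \ref{assump-cone}, since this program has the same feasible set as \eqref{convex-subprob} with $I=I^*$). This yields the pair $\tx^+:=\Pi_{\cB_{I^*}}(\tx-\nabla\Phi_{\rho,\nu}(\tx)/L_{\rho,\nu})\in\cB_{I^*}$ and $\mu:=\rho[A\tx^+-b-\Pi_{\cK^*}(A\tx^+-b)]\in-\cK$ with $\mu^T\Pi_{\cK^*}(A\tx^+-b)=0$ and the two estimates
\[
\nabla f(\tx^+)+\nu\tx^+ +A^T\mu\ \in\ -\cN_{\cB_{I^*}}(\tx^+)+\cU(2\sqrt{2L_{\rho,\nu}\xi}),\qquad d_{\cK^*}(A\tx^+-b)\ \le\ \tfrac1\rho\|\mu^*\|+\sqrt{\xi/\rho}.
\]
For the stationarity part of the definition of an $\epsilon$-approximate local minimizer, I would absorb the perturbation gradient: $\|\nu\tx^+\|\le\nu D=\epsilon/2$ and, by the choice of $\xi$, $2\sqrt{2L_{\rho,\nu}\xi}=2\sqrt{\epsilon^2/16}=\epsilon/2$, so $\nabla f(\tx^+)+A^T\mu\in-\cN_{\cB_{I^*}}(\tx^+)+\cU(\epsilon)$; since $I(\tx^+)\supseteq I^*$ and hence $\cN_{\cB_{I^*}}(\tx^+)\subseteq\cN_{\cB_{I(\tx^+)}}(\tx^+)$, this is exactly the required stationarity at $\tx^+$, and $\mu^T\Pi_{\cK^*}(A\tx^+-b)=0$ is the required complementarity.

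It remains to force $d_{\cK^*}(A\tx^+-b)\le\epsilon$, and this is where the real work lies and where the value of $\rho$ in \eqref{rho-nu} is manufactured. Using $L_{\rho,\nu}\ge\rho\|A\|^2$ one has $\sqrt{\xi/\rho}\le\epsilon/(4\sqrt2\,\rho\|A\|)$. The remaining ingredient is that the multiplier $\mu^*$ in the estimate above need not be bounded in isolation: writing $\Phi_{\rho,\nu}=\Phi_\rho+\tfrac{\nu}{2}\|\cdot\|^2$ and noting $0\le\tfrac{\nu}{2}\|x\|^2\le\tfrac{\nu}{2}D^2=\epsilon D/4$ on $\cB$, the perturbation shifts the relevant optimal values by at most $\epsilon D/4$, so the feasibility estimate can be driven instead by a Lagrange multiplier of the \emph{unperturbed} program \eqref{convex-subprob} with $I=I^*$ — whose norm is at most $t=\max_I\min_{\mu\in\Lambda_I}\|\mu\|$ — at the cost of an extra error of order $\sqrt{\nu D^2/(2\rho)}=\sqrt{\epsilon D/(4\rho)}$. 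This turns $d_{\cK^*}(A\tx^+-b)\le\epsilon$ into the quadratic inequality $\bigl(t+\tfrac{\epsilon}{4\sqrt2\|A\|}\bigr)u^2+\tfrac{\sqrt{\epsilon D}}{2}u\le\epsilon$ in $u:=1/\sqrt\rho$, whose positive root, after rationalizing, gives exactly $\rho=(\sqrt D+\sqrt{D+16t+2\sqrt2\epsilon/\|A\|})^2/(16\epsilon)$ — the value prescribed in \eqref{rho-nu}. The main obstacle is precisely this last step: correctly tracking the perturbation error through the proof of Lemma \ref{approx-soln} so that the unperturbed multiplier bound $t$ is the quantity that appears, and matching the constants so that the quadratic closes at exactly \eqref{rho-nu}. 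Once this is done, $\tx^+$ meets all three requirements of the definition and is an $\epsilon$-approximate local minimizer of \eqref{l0-cone}, and the iteration count is the $N$ obtained in the first paragraph.
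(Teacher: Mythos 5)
Your proposal follows essentially the same route as the paper's own proof: apply Theorem \ref{complexity} to the strongly convex penalized perturbation with $\sigma=\nu$ and $\xi=\eps^2/(32L_{\rho,\nu})$, use Lemma \ref{approx-soln} on $\Phi_{\rho,\nu}$ for the stationarity relation (absorbing $\nu\tx^+$ via $\nu D\le\eps/2$), and then shift to the unperturbed $\Phi_\rho$ at a cost of $\nu D^2/2=\eps D/4$ so that the feasibility bound is driven by the unperturbed multiplier bound $t$, closing the quadratic in $1/\sqrt{\rho}$ at exactly \eqref{rho-nu}. The argument and all constants match the paper's proof.
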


\begin{proof}
From Theorem \ref{limit-thm}, we know that $x^k \to x^*$ for some local minimizer $x^*$ 
of \eqref{qp-pert}, $I(x^k) \to I(x^*)$ and $\Psi_{\rho,\nu}(x^k) \to \Psi_{\rho,\nu}(x^*) = \Psi^*_{\rho,\nu}$. 
By Theorem \ref{complexity}, after at most $N$ iterations, the IHT method applied to \eqref{qp-pert} 
generates $\tx \in \cB$ such at $I(\tx)=I(x^*)$ and 
$\Psi_{\rho,\nu}(\tx)-\Psi_{\rho,\nu}(x^*) \le \xi := \epsilon^2/(32L_{\rho,\nu})$. It then follows that 
$\Phi_{\rho,\nu}(\tx)-\Phi_{\rho,\nu}(x^*) \le \xi$. Since $x^*$ is a local minimizer of \eqref{qp-pert}, we see 
that 
\beq \label{pert-subprob}
x^* = \arg\min_{x\in \cB_{I^*}} \Phi_{\rho,\nu}(x),
\eeq
where $I^*=I(x^*)$. Hence, $\tx$ is a $\xi$-approximate solution of \eqref{pert-subprob}. 
In view of Lemma \ref{approx-soln}, we see that the pair $(\tx^+,\mu)$ defined as 
$\tx^+ := \Pi_{\cB_I}(\tx-\nabla \Phi_{\rho,\nu}(\tx)/{L_{\rho,\nu}})$ and 
$\mu := \rho[A\tx^+ - b - \Pi_{\cK^*}(A\tx^+ - b)]$ satisfies  
\[
\nabla f(\tx^+) + \nu \tx^+ +  A^T \mu \ \in \ -\cN_{\cB_{I^*}}(\tx^+) + \cU(2\sqrt{2L_{\rho,\nu}\xi}) 
\ = \ -\cN_{\cB_{I^*}}(\tx^+) + \cU(\epsilon/2),  
\]
which together with the fact that $\nu\|\tx^+\| \le \nu D \le \epsilon/2$ implies that 
\[
\nabla f(\tx^+) + A^T \mu \ \in \ -\nu \tx^+ -\cN_{\cB_{I^*}}(\tx^+) + \cU(\epsilon/2) \ 
\subseteq \ -\cN_{\cB_{I^*}}(\tx^+) + \cU(\epsilon).
\]
In addition, it follows from Lemma \ref{proj-grad} (c) that $\Phi_{\rho,\nu}(\tx^+) \le \Phi_{\rho,\nu}(\tx)$, and hence 
\[
\Phi_{\rho,\nu}(\tx^+) -  \Phi_{\rho,\nu}(x^*) \ \le \ \Phi_{\rho,\nu}(\tx)-\Phi_{\rho,\nu}(x^*) \ \le \ 
\xi.
\]
Let $\Phi^*_\rho = \min\{\Phi_\rho(x): x\in \cB_{I^*}\}$, where $\Phi_\rho$ is defined in 
\eqref{Phi-rho}.  Notice that $\Phi_{\rho,\nu}(x^*) \le \Phi^*_\rho + \nu D^2/2$. It then 
follows that 
\[
\Phi_{\rho}(\tx^+) -  \Phi^*_{\rho} \ \le \ \Phi_{\rho,\nu}(\tx^+) - \Phi_{\rho,\nu}(x^*) + \frac{\nu D^2}{2} 
\ \le \ \xi + \frac{\epsilon D}{4} \ \le \ \frac{\epsilon^2}{32\rho\|A\|^2} + \frac{\epsilon D}{4}. 
\]
Let $\mu^* \in \Arg\min\{\|\mu\|: \mu \in \Lambda_{I^*}\}$, where  $\Lambda_{I^*}$ is the set of 
Lagrange multipliers of \eqref{convex-subprob} with $I=I^*$. 
In view of Lemma \ref{approx-soln} and the assumption $t \ge \hat t \ge \|\mu^*\|$, we obtain that 
\[
d_{\cK^*}(A\tx^+-b) \ \le \ \frac{1}{\rho}\|\mu^*\| + \sqrt{\frac{\epsilon^2}{32\rho^2\|A\|^2}+\frac{\epsilon D}{4\rho}} 
\ \le \ \frac{1}{\rho}\left(t+\frac{\epsilon}{\sqrt{32}\|A\|}\right)+\sqrt{\frac{\epsilon D}{4\rho}} 
\ = \ \epsilon,
\]
where the last inequality is due to \eqref{rho-nu}. Hence, $\tx^+$ is an $\epsilon$-approximate 
local minimizer of \eqref{l0-cone}. 
\end{proof}

\gap

For the above method, the fixed penalty parameter $\rho$ is used through all iterations, which may be 
too conservative. To improve its practical performance, we can update $\rho$ dynamically. The 
resulting variant of the method is presented as follows. Before proceeding, we define the 
projected gradient of $\Phi_\rho$ at $x \in \cB_I$ with respect to $\cB_I$ as  
\beq \label{g-rho}
g(x;\rho,I) = L_\rho[x-\Pi_{\cB_I}(x-\frac{1}{L_\rho}\nabla \Phi_\rho(x))], 
\eeq
where $I \subseteq \{1,\ldots,n\}$, and $\Phi_\rho$ and $L_\rho$ are defined in \eqref{Phi-rho} and 
\eqref{L-rho}, respectively.

\gap

\noindent
{\bf A variant of IHT method for \eqnok{l0-cone}:}  \\ [5pt]
Let $\{\epsilon_k\}$ be a positive decreasing sequence. Let $\rho_0 >0$, 
$\tau > 1$, $t > \max\limits_{I \subseteq \{1,\ldots,n\}}\min\limits_{\mu \in \Lambda_I}\|\mu\|$, 
where $\Lambda_I$ is the set of Lagrange multipliers of \eqref{convex-subprob}. 
Choose an arbitrary $x^0\in \cB$. Set $k=0$. 
\begin{itemize}
\item[1)] Start from $x^{k-1}$ and apply the IHT method or its variant  to problem 
\eqref{l0-penalty} with $\rho=\rho_k$ until finding some $x^k \in \cB$ such that 
\beq \label{inner-cond-c}
d_{\cK^*}(Ax^k-b) \le \frac{t}{\rho_k}, \ \ \ \ \ \|g(x^k;\rho_k,I_k)\|  \le  
\min\{1,L_{\rho_k}\}\epsilon_k, 
\eeq
where $I_k = I(x^k)$.
\item[2)]
Set $\rho_{k+1} := \tau\rho_k$.
\item[3)]
Set $k \leftarrow k+1$ and go to step 1). 
\end{itemize}
\noindent
{\bf end}

\vgap


The following theorem shows that $x^k$ satisfying \eqref{inner-cond-c} can be found 
within a finite number of iterations by  the IHT method or its variant  applied to problem 
\eqref{l0-penalty} with $\rho=\rho_k$. Without loss of generality, we consider the IHT 
method or its variant applied to problem \eqnok{l0-penalty} with any given $\rho >0$.

\begin{theorem} \label{inner}
Let $x_0 \in \cB$ be an arbitrary point and the sequence $\{x_l\}$ be 
generated by the IHT method or its variant applied to problem \eqnok{l0-penalty}.
Then, the following statements hold:
\bi
\item[\rm(i)]
$\lim\limits_{l\to\infty} g(x_l;\rho,I_l) = 0$, where $I_l = I(x_l)$ for all $l$.  
\item[\rm(ii)]
$\lim\limits_{l\to\infty} d_{\cK^*}(Ax_l-b) \le  \frac{\hat t}{\rho}$, where 
$\hat t :=  \max\limits_{I \subseteq \{1,\ldots,n\}}
\min\limits_{\mu \in \Lambda_I}\|\mu\|$ and $\Lambda_I$ is the set of Lagrange multipliers 
of \eqref{convex-subprob}.
\ei
\end{theorem}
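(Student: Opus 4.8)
The plan is to leverage the convergence results already established for the IHT method (Theorem \ref{limit-thm}) and its variant (Theorem \ref{outer-iter}) together with the approximation guarantees for the quadratic penalty subproblem coming from Lemma \ref{approx-soln}. First I would invoke Theorem \ref{limit-thm} (or Theorem \ref{outer-iter} in the variant case) applied to problem \eqref{l0-penalty}: since $\Phi_\rho$ is convex, differentiable, bounded below on $\cB$ (because $f$ is), and $\nabla \Phi_\rho$ is Lipschitz with constant $L_\rho$ from \eqref{L-rho}, the hypotheses of Section \ref{l0-box} are met. Hence $x_l \to x^*$ for some local minimizer $x^*$ of \eqref{l0-penalty}, the index sets $I_l = I(x_l)$ stabilize to $I^* := I(x^*)$ for all large $l$, and $\Psi_\rho(x_l) \to \Psi_\rho(x^*)$. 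In particular, once $I_l = I^*$, the iterates coincide with the projected gradient iterates for $\min\{\Phi_\rho(x): x \in \cB_{I^*}\}$, so by Theorem \ref{sd-lemma}(d) (or directly from $\phi(x_l) - \phi^* \to 0$ via Lemma \ref{proj-grad}(d)) we get $\|g(x_l;\rho,I^*)\| \to 0$. Since $I_l = I^*$ eventually, this is exactly statement (i).

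For statement (ii), the idea is to pass to the limit in the conclusion of Lemma \ref{approx-soln}. As $l \to \infty$ the approximation error $\xi_l := \Phi_\rho(x_l) - \min\{\Phi_\rho(x): x \in \cB_{I^*}\}$ tends to $0$ (this is the same convergence just used), so $x_l$ is a $\xi_l$-approximate solution of the subproblem $\min_{x \in \cB_{I^*}} \Phi_\rho(x)$ with $\xi_l \to 0$. Lemma \ref{approx-soln} then gives, for the associated $x_l^+ := \Pi_{\cB_{I^*}}(x_l - \nabla\Phi_\rho(x_l)/L_\rho)$, the bound
\[
d_{\cK^*}(A x_l^+ - b) \ \le \ \frac{1}{\rho}\|\mu^*_{I^*}\| + \sqrt{\frac{\xi_l}{\rho}},
\]
where $\mu^*_{I^*}$ is a Lagrange multiplier for \eqref{convex-subprob} with $I = I^*$, which exists by Assumption \ref{assump-cone}. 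Choosing $\mu^*_{I^*}$ of minimum norm gives $\|\mu^*_{I^*}\| \le \hat t$. Finally, since $x_l \to x^*$ and $g(x_l;\rho,I^*) \to 0$, we have $x_l^+ = x_l - g(x_l;\rho,I^*)/L_\rho \to x^*$; by continuity of $d_{\cK^*}(A\cdot - b)$ and $\xi_l \to 0$, taking $l \to \infty$ yields $\lim_l d_{\cK^*}(A x_l - b) = d_{\cK^*}(Ax^* - b) \le \hat t/\rho$, which is statement (ii).

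The main obstacle is the bookkeeping around $x_l$ versus $x_l^+$: Lemma \ref{approx-soln} controls the \emph{projected} point $x_l^+$, not $x_l$ itself, so I must argue that $\|x_l - x_l^+\| = \|g(x_l;\rho,I^*)\|/L_\rho \to 0$ (from part (i)) to transfer the infeasibility bound from $x_l^+$ back to $x_l$ in the limit. A secondary point to handle carefully is that the convergence statements in Section \ref{l0-box} are phrased for the IHT method with a fixed $L$; for the variant one uses Theorem \ref{outer-iter} together with the fact, noted in its proof, that the effective Lipschitz parameters $\bar L_k$ stay in the bounded interval $[L_{\min}, \tau(L_f + \rho\|A\|^2 + \eta))$, so the same limiting arguments apply. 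Everything else is a routine passage to the limit.
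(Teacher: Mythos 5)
Your part (i) is essentially the paper's argument: invoke Theorem \ref{limit-thm} (resp.\ \ref{outer-iter}) so that $x_l \to x^*$, $I_l$ stabilizes to $I^*$, and $\Phi_\rho(x_l)\to\Phi_\rho(x^*)=\min_{x\in\cB_{I^*}}\Phi_\rho(x)$, then apply Lemma \ref{proj-grad}(d) to force $g(x_l;\rho,I^*)\to 0$ (your citation of ``Theorem \ref{sd-lemma}(d)'' is a slip --- that theorem has no part (d) --- but your parenthetical correction to Lemma \ref{proj-grad}(d) is the right reference). For part (ii), however, you take a genuinely different route. The paper does not use Lemma \ref{approx-soln} at all here; it applies Lemma \ref{gap-infeas} to \eqref{convex-subprob}, getting $f(x_l)-f^*_{I_l}\ge -\hat t\, d_{\cK^*}(Ax_l-b)$, combines this with $f^*_{I^*}\ge \Phi_\rho(x^*)$ to obtain the quadratic inequality $\Phi_\rho(x_l)-\Phi_\rho(x^*) \ge -\hat t\, d_{\cK^*}(Ax_l-b)+\frac{\rho}{2}[d_{\cK^*}(Ax_l-b)]^2$, and solves it to get the explicit per-iterate bound $d_{\cK^*}(Ax_l-b)\le \hat t/\rho + \sqrt{(\Phi_\rho(x_l)-\Phi_\rho(x^*))/\rho}$, from which the limit statement is immediate. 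Your route instead feeds the approximation error $\xi_l=\Phi_\rho(x_l)-\Phi_\rho(x^*)\to 0$ into Lemma \ref{approx-soln} with $X=\cB_{I^*}$, bounds $d_{\cK^*}(Ax_l^+-b)$ for the projected point $x_l^+$, and transfers the bound back to $x_l$ in the limit via $\|x_l-x_l^+\|=\|g(x_l;\rho,I^*)\|/L_\rho\to 0$ and continuity of $y\mapsto d_{\cK^*}(Ay-b)$. Both arguments are correct and rest on the same existence of multipliers from Assumption \ref{assump-cone}; the paper's has the advantage of producing a direct, quantitative infeasibility bound on $x_l$ itself for every $l$ (which is also in the spirit of how the termination test \eqref{inner-cond-c} is checked), whereas yours only controls the limit and requires part (i) as an input, but it reuses Lemma \ref{approx-soln} in exactly the way the complexity theorems of Section \ref{l0-cp} do, so it is a natural and economical alternative.
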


\begin{proof}
(i) It follows from Theorems \ref{limit-thm} and \ref{outer-iter} that $x_l \to x^*$ for some local 
minimizer $x^*$ of \eqref{l0-penalty} and moreover, $\Phi_\rho(x_l) \to \Phi_\rho(x^*)$ and 
$I_l \to I^*$, where $I_l=I(x_l)$ and $I^*=I(x^*)$. We also know that  
\[
x^* \in \Arg\min\limits_{x\in\cB_{I^*}} \Phi_\rho(x).
\]
It then follows from Lemma \ref{proj-grad} (d) that 
\[
\Phi_\rho(x_l) -\Phi_\rho(x^*) \ \ge \ \frac{1}{2L_\rho} \|g(x_l;\rho,I^*)\|^2, \ \ \ l \ge N. 
\]
Using this inequality and  $\Phi_\rho(x_l) \to \Phi_\rho(x^*)$, we thus have $g(x_l;\rho,I^*) \to 0$. 
Since $I_l=I^*$ for $l \ge N$, we also have $g(x_l;\rho,I_l) \to 0$. 

(ii) Let $f^*_I$ be defined in \eqref{convex-subprob}. Applying Lemma \ref{gap-infeas} to 
problem \eqref{convex-subprob}, we know that 
\beq \label{feas-bdd}
f(x_l) - f^*_{I(l)}  \ \ge \ -\hat t d_{\cK^*}(Ax_l-b), \ \ \  \forall l,
\eeq
where $\hat t$ is defined above. Let $x^*$ and $I^*$ be defined in the proof of statement (i). 
We observe that $f^*_{I^*} \ge \Phi_\rho(x^*)$. Using this relation and 
\eqref{feas-bdd}, we have that for sufficiently large $l$, 
\[
\ba{lcl}
\Phi_\rho(x_l) - \Phi_\rho(x^*) &=& f(x_l) + \frac{\rho}{2}[d_{\cK^*}(Ax_l-b)]^2 - \Phi_\rho(x^*) 
\ \ge \ f(x_l) - f^*_{I^*} + \frac{\rho}{2}[d_{\cK^*}(Ax_l-b)]^2 \\ [6pt] 
&=&  f(x_l) - f^*_{I(l)} + \frac{\rho}{2}[d_{\cK^*}(Ax_l-b)]^2 \ge \ -\hat t d_{\cK^*}(Ax_l-b) 
+ \frac{\rho}{2}[d_{\cK^*}(Ax_l-b)]^2,  
\ea
\]
which implies that 
\[
d_{\cK^*}(Ax_l-b) \ \le \ \frac{\hat t}{\rho} + \sqrt{\frac{\Phi_\rho(x_l) - \Phi_\rho(x^*)}{\rho}}.
\]
This inequality together with the fact $\lim_{l\to\infty} \Phi_\rho(x_l) = \Phi_\rho(x^*)$ 
yields statement (ii).
\end{proof}

\gap

\begin{remark}
From Theorem \ref{inner}, we can see that the inner iterations of the above method terminates finitely. 
\end{remark}

\gap

We next establish convergence of the outer iterations of the above variant of  the IHT method for 
\eqref{l0-cone}. In particular, we show that every accumulation point of $\{x^k\}$ is a local minimizer 
of \eqref{l0-cone}. 

\begin{theorem}
Let $\{x^k\}$ be the sequence generated by the above variant of the IHT method for solving 
\eqref{l0-cone}. Then any accumulation point of $\{x^k\}$ is a local minimizer of \eqref{l0-cone}. 
\end{theorem}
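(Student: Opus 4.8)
The plan is to let $x^*$ be any accumulation point of $\{x^k\}$, so that $x^k \to x^*$ along some subsequence $K$, and to extract from the termination condition \eqref{inner-cond-c} a limiting set of KKT-type relations matching \eqref{KKT-cond}. The first step is to control the index sets: since each $x^k$ satisfies $|x^k_i| \ge \bar\delta > 0$ whenever $x^k_i \ne 0$ (the lower-bound property established for the IHT method and its variant in Theorems \ref{limit-thm} and \ref{outer-iter}, applied to each inner run with $\rho=\rho_k$), the sets $I_k = I(x^k)$ take finitely many values, so along a further subsequence (still call it $K$) we have $I_k \equiv I^*$ for some fixed $I^*$, and passing to the limit gives $I(x^*) = I^*$ together with $x^* \in \cB_{I^*}$. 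The second step is the feasibility estimate: the first half of \eqref{inner-cond-c} gives $d_{\cK^*}(Ax^k - b) \le t/\rho_k \to 0$ since $\rho_k = \rho_0\tau^k \to \infty$, so by continuity of $d_{\cK^*}$ we get $d_{\cK^*}(Ax^*-b) = 0$, i.e. $Ax^* - b \in \cK^*$.

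The third and central step is to produce the multiplier. Following the construction in Lemma \ref{approx-soln}, set $y^k := \Pi_{\cB_{I^*}}(x^k - \nabla\Phi_{\rho_k}(x^k)/L_{\rho_k})$ and $\mu^k := \rho_k[Ay^k - b - \Pi_{\cK^*}(Ay^k-b)]$; note $\mu^k \in -\cK$ and $(\mu^k)^T\Pi_{\cK^*}(Ay^k - b) = 0$. The second half of \eqref{inner-cond-c} says $\|g(x^k;\rho_k,I^*)\| = L_{\rho_k}\|x^k - y^k\| \le \min\{1,L_{\rho_k}\}\epsilon_k$, hence $\|x^k - y^k\| \le \epsilon_k/\max\{1,L_{\rho_k}\} \le \epsilon_k$, so $y^k \to x^*$ as well along $K$ (using $\epsilon_k$ decreasing to a limit; if $\epsilon_k \to \epsilon_\infty > 0$ one still has $\|x^k-y^k\|\le \epsilon_k/L_{\rho_k}\to 0$ since $L_{\rho_k}\to\infty$). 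By Lemma \ref{proj-grad}(b) applied with $\phi = \Phi_{\rho_k}$ and $\epsilon = \|g(x^k;\rho_k,I^*)\|$ (or directly from the approximate-optimality relation in Lemma \ref{approx-soln} with $\xi$ governed by Lemma \ref{proj-grad}(c)), one gets
\[
\nabla f(y^k) + A^T\mu^k \in -\cN_{\cB_{I^*}}(y^k) + \cU(2\|g(x^k;\rho_k,I^*)\|) \subseteq -\cN_{\cB_{I^*}}(y^k) + \cU(2\epsilon_k).
\]
The key obstacle is showing $\{\mu^k\}$ is bounded so a convergent subsequence $\mu^k \to \mu^* \in -\cK$ exists; this is exactly where Lemma \ref{gap-infeas} (as used in Theorem \ref{inner}(ii)) is needed, giving $\rho_k[d_{\cK^*}(Ay^k-b)]^2 \le$ a bounded quantity and hence $\|\mu^k\| = \rho_k d_{\cK^*}(Ay^k-b)$ stays bounded; alternatively one can invoke that $\rho_k d_{\cK^*}(Ax^k-b) \le t$ together with the $O(\epsilon_k)$ perturbation from replacing $x^k$ by $y^k$.

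Finally, pass to the limit along the subsequence on which $\mu^k \to \mu^*$: continuity of $\nabla f$, closedness of the normal-cone map $\cN_{\cB_{I^*}}(\cdot)$ (outer semicontinuity, valid since $\cB_{I^*}$ is polyhedral), and $\epsilon_k \to 0$ give $\nabla f(x^*) + A^T\mu^* \in -\cN_{\cB_{I^*}}(x^*)$; continuity of $\Pi_{\cK^*}$ and of the inner product gives $(\mu^*)^T(Ax^*-b) = 0$; and we already have $Ax^*-b\in\cK^*$ and $\mu^*\in-\cK$. These are precisely the conditions \eqref{KKT-cond} with $I^* = I(x^*)$, which by Assumption \ref{assump-cone} and the discussion preceding \eqref{KKT-cond} certify that $x^*$ is a minimizer of \eqref{convex-subprob} with $I=I^*$, equivalently a local minimizer of \eqref{l0-cone}. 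The only subtlety to handle carefully is the case $\epsilon_k \to \epsilon_\infty > 0$ versus $\epsilon_k \to 0$: since the hypothesis only requires $\{\epsilon_k\}$ positive and decreasing, one should note that the error terms entering the stationarity relation are $2\epsilon_k/\max\{1,L_{\rho_k}\}$-type quantities or are killed by $L_{\rho_k}\to\infty$, so the limiting relation is exact regardless; I would state this explicitly to close the argument cleanly.
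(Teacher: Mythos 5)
Your proposal is correct and follows essentially the same route as the paper's proof: pass to a subsequence on which $I_k$ is constant, use the termination condition \eqref{inner-cond-c} together with Lemma \ref{proj-grad} to obtain an $O(\epsilon_k)$-approximate stationarity relation, set $\mu^k := \rho_k[A\cdot-b-\Pi_{\cK^*}(A\cdot-b)]$, bound $\|\mu^k\|=\rho_k\, d_{\cK^*}(\cdot)\le t$ (plus an $O(\epsilon_k)$ perturbation in your variant) directly from \eqref{inner-cond-c}, and pass to the limit to recover \eqref{KKT-cond}. The only inaccuracy is your closing aside: if $\epsilon_k\downarrow\epsilon_\infty>0$, the error in the normal-cone inclusion is $\min\{1,L_{\rho_k}\}\epsilon_k\to\epsilon_\infty$ (only the displacement $\|\tx^k-x^k\|\le\epsilon_k/L_{\rho_k}$ is killed by $L_{\rho_k}\to\infty$, not the stationarity residual), so, exactly as in the paper, one does need $\epsilon_k\to 0$ for the limiting relation to be exact.
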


\begin{proof}
Let 
\[
\tx^k = \Pi_{\cB_{I_k}}(x^k-\frac{1}{L_{\rho_k}} \nabla \Phi_{\rho_k}(x^k)).
\]
Since $\{x^k\}$ satisfies \eqref{inner-cond-c}, it follows from Lemma \ref{proj-grad} (a) 
that 
\beq \label{opt-cond-seq}
\nabla \Phi_{\rho_k}(x^k) \ \in \ -\cN_{\cB_{I_k}}(\tx^k) + \cU(\epsilon_k),
\eeq
where $I_k=I(x^k)$. Let $x^*$ be any accumulation point of $\{x^k\}$. Then there exists a 
subsequence $K$ such that $\{x^k\}_K \to x^*$. By passing to a subsequence if necessary, 
we can assume that $I_k = I$ for all $k\in K$. Let 
\[
\mu^k = \rho_k [Ax^k-b-\Pi_{\cK^*}(Ax^k-b)].
\]
We clearly see that 
\beq \label{orth}
(\mu^k)^T\Pi_{\cK^*}(Ax^k-b)=0.
\eeq
Using \eqref{opt-cond-seq} and the definitions of $\Phi_\rho$ and $\mu^k$, 
we have
\beq \label{approx-opt}
\nabla f(x^k) + A^T\mu^k \in -\cN_{\cB_{I}}(\tx^k) + \cU(\epsilon_k), \ \ \ \forall k\in\cK.
\eeq 
By \eqref{g-rho}, \eqref{inner-cond-c} and the definition of $\tx^k$, one can observe that 
\beq \label{diff-x}
\|\tx^k - x^k\| \ = \ \frac{1}{L_{\rho_k}} \|g(x^k;\rho_k,I_k)\| \ \le \ \epsilon_k.
\eeq
In addition, notice that $\|\mu^k\| = \rho_k d_{\cK^*}(Ax^k-b)$, which together 
with \eqref{inner-cond-c} implies that $\|\mu^k\| \le t$ for all $k$. Hence, $\{\mu^k\}$ 
is bounded. By passing to a subsequence if necessary, we can assume that $\{\mu^k\}_K \to \mu^*$. 
Using \eqref{diff-x} and upon taking limits on both sides of \eqnok{orth} and 
\eqnok{approx-opt} as $k\in K \to \infty$, we have
\[
(\mu^*)^T\Pi_{\cK^*}(Ax^*-b)=0, \ \ \ \nabla f(x^*) + A^T\mu^* \in  -\cN_{\cB_{I}}(x^*)
\]
In addition, since $x^k_I =0$ for $k\in K$, we know that $x^*_I=0$. Also, it follows 
from \eqref{inner-cond-c} that $d_{\cK^*}(Ax^*-b)=0$, which implies that 
$Ax^*-b \in \cK^*$. These relations yield that 
\[
x^* \in \Arg\min\limits_{x\in \cB_I}\{f(x): Ax-b \in \cK^*\},
\]
and hence, $x^*$ is a local minimizer of \eqref{l0-cone}.
\end{proof}

\section{Concluding remarks}
\label{conclude}

In this paper we studied iterative hard thresholding (IHT) methods for solving $l_0$ regularized  
convex cone programming problems. In particular, we first proposed  an IHT method and its variant 
for solving $l_0$ regularized box constrained convex programming. We showed that the sequence 
generated by these methods converges to a local minimizer. Also, we established the iteration 
complexity of the IHT method for finding an $\eps$-local-optimal solution. We then proposed a 
method for solving $l_0$ regularized  convex cone programming by applying the IHT method to its 
quadratic penalty relaxation and established its iteration complexity for finding an $\eps$-approximate 
local minimizer. Finally, we proposed a variant of this method in which the associated penalty 
parameter is dynamically updated, and showed that every accumulation point is a local minimizer 
of the problem.

Some of the methods studied in this paper can be extended to solve some $l_0$ regularized nonconvex 
optimization problems.  For example,  the  IHT method and its variant can be applied to problem 
\eqref{l0-min} in which $f$ is nonconvex and $\nabla f$ is Lipschitz continuous. In addition,  the 
numerical study of the IHT methods will be presented in the working paper \cite{HuLiLu12}. Finally, 
it would be interesting to extend the methods of this paper to solve rank minimization problems and 
compare them with the methods studied in \cite{CaCaSh10,JaMeDh10}.  This is left as a future research. 

\section*{Acknowledgment}

The author would like to thank Ting Kei Pong for proofreading and suggestions which substantially 
improves the presentation of the paper.     

\gap

\gap

\gap


\begin{thebibliography}{10}

\bibitem{BaBo88}
J. Barzilai and J.M. Borwein. 
\newblock Two point step size gradient methods. 
\newblock {\em IMA J. Numer. Anal.}, 8:141--148, 1988.

\bibitem{BlDa08}
T.~Blumensath and M.~E.~Davies. 
\newblock Iterative thresholding for sparse approximations. 
\newblock {\em J. FOURIER ANAL. APPL.}, 14:629--654, 2008.

\bibitem{BlDa09}
T.~Blumensath and M.~E.~Davies.  
\newblock Iterative hard thresholding for compressed sensing. 
\newblock {\em Appl. Comput. Harmon. Anal.}, 27(3):265--274, 2009.

\bibitem{BiMaRa00}
E.~G.~Birgin, J.~M.~Mart\'inez, and M.~Raydan.
\newblock Nonmonotone spectral projected gradient methods on convex sets.
\newblock {\em SIAM J. Optim.}, 4:1196--1211, 2000.

\bibitem{CaCaSh10}
J. Cai, E.~Cand\`{e}s, and Z. Shen. 
\newblock A singular value thresholding algorithm for matrix completion.
\newblock {\em SIAM J. Optim.}, 20:1956--1982, 2010.

\bibitem{HeGiTr06}
K.~K.~Herrity, A.~C.~Gilbert, and J.~A.~Tropp. 
\newblock Sparse approximation via iterative thresholding. 
\newblock {\em IEEE International Conference on Acoustics, Speech and Signal Processing}, 2006.

\bibitem{HuLiLu12}
J. Huang, S. Liu, and Z. Lu.
\newblock Sparse approximation via nonconvex regularizers.
\newblock Working paper, Department of Statistics, Texas A\&M University, 2012.

\bibitem{JaMeDh10}
P. Jain, R. Meka, and I. Dhillon.
\newblock Guaranteed rank minimization via singular value projection.
\newblock {\em Neural Information Processing Systems}, 937--945, 2010.

\bibitem{LaMo12}
G. Lan and R. D. C. Monteiro.
\newblock Iteration-complexity of first-order penalty methods for convex programming.
\newblock To appear in {\em Math. Prog.}.

\bibitem{LuZh12}
Z. Lu and Y. Zhang.
\newblock Sparse approximation via penalty decomposition methods.
\newblock Manuscript, Department of Mathematics, Simon Fraser University, Februray 2012.

\bibitem{MaZh93}
S.~Mallat and Z.~Zhang.
\newblock Matching pursuits with time-frequency dictionaries.
\newblock {\em IEEE T. Image Process.}, 41(12):3397--3415, 1993.

\bibitem{Nes04}
Y. Nesterov.
\newblock {\em Introductory Lectures on Convex Programming: a basic course.} 
\newblock Kluwer Academic Publishers, Massachusetts, 2004.

\bibitem{Nik11}
M. Nikolova.
\newblock Description of the minimizers of least squares regularized with $l_0$ norm.
\newblock Report HAL-00723812, CMLA - CNRS ENS Cachan, France, October 2011.

\bibitem{Tr04}
J.~A.~Tropp. 
\newblock Greed is good: algorithmic results for sparse approximation.
\newblock {\em IEEE T. Inform. Theory}, 50(10):2231--2242, 2004.

\end{thebibliography}
\end{document}